\newcommand{\mysection}[1]{\section{#1}
      \setcounter{equation}{0}}
\newcommand\cbrk{\text{$]$\kern-.15em$]$}} 
\newcommand\opar{\text{\raise.2ex\hbox{${\scriptstyle | }$}\kern-.34em$($} }
\DeclareMathOperator*{\esssup}{\pi-ess\,sup}
\newtheorem{theorem}{Theorem}[section]
\newtheorem{lemma}[theorem]{Lemma}
\newtheorem{corollary}[theorem]{Corollary}
\theoremstyle{definition}
\newtheorem{assumption}{Assumption}[section]
\newtheorem{example}{Example}[section]
\theoremstyle{remark}
\newtheorem{remark}{Remark}[section]
\newcommand\bL{\mathbb{L}}
\newcommand\bR{\mathbb{R}}
\newcommand\cB{\mathcal{B}}
\newcommand\cF{\mathcal{F}}
\newcommand\cK{\mathcal{K}}
\newcommand\cL{\mathcal{L}}
\newcommand\cP{\mathcal{P}}
\newcommand\cU{\mathcal{U}}
\newcommand\cZ{\mathcal{Z}}
 \newcommand{\sumstar}
 {\operatornamewithlimits{\sum@\kern-.2em\raise1ex\hbox{*}}}
\begin{document}

\author[I. Gy\"ongy]{Istv\'an Gy\"ongy}
\address{School of Mathematics and Maxwell Institute, 
University of Edinburgh, Scotland, United Kingdom.}
\email{i.gyongy@ed.ac.uk}

\author[S. Wu]{Sizhou Wu}
\address{School of Mathematics,
University of Edinburgh,
King's  Buildings,
Edinburgh, EH9 3JZ, United Kingdom}
\email{Sizhou.Wu@ed.ac.uk}

\keywords{It\^o formula, random measures, L\'evy processes}

\subjclass[2010]{Primary  	60H05, 60H15; Secondary 35R60}

\begin{abstract} A well-known It\^o formula for 
finite dimensional processes, given in terms of stochastic integrals with respect to 
Wiener processes and Poisson random measures, is revisited and 
is revised. The revised formula, which corresponds to the classical It\^o formula for semimartingales
with jumps, is then used to obtain a generalisation of an important infinite dimensional It\^o formula  
for continuous semimartingales from  
Krylov \cite{K2010} to a class of $L_p$-valued 
jump processes. This generalisation is motivated by applications in the theory of stochastic 
PDEs. 
\end{abstract}

\title[It\^o formulas]{On It\^o formulas for jump processes}

\maketitle

\mysection{Introduction}
This is a review paper on some It\^o formulas in finite and infinite dimensional 
spaces. First we consider finite dimensional It\^o-L\'evy processes, which are 
$\bR^M$-valued stochastic processes $X=(X_t)_{t\geq0}$ given in terms of 
stochastic integrals with respect to Wiener processes and Poisson random measures.  
They play important roles in modelling stochastic phenomena when jumps may occur at random times, 
see for example, \cite{CT} and \cite{DeMa}. Chain rules, 
called It\^o formulas, for their transformations $\phi(X_t)$ by sufficiently smooth functions 
$\phi$ are basic tools in the investigations of the stochastic phenomena modelled by It\^o-L\'evy processes, 
see, e.g.,  \cite{KLL} and the references therein. 
It is therefore important to have It\^o formulas for large classes of processes $X$ and functions 
$\phi$. 
Note that classical
It\^o's formula, \eqref{formula standard} below, 
holds only under some restrictive 
conditions, which are not satisfied in important applications, for example in applications 
to filtering theory of partially observed jump diffusions. Therefore we revisit the 
chain rule \eqref{formula standard} for finite dimensional It\^o-L\'evy processes,  
discuss its limitations, and derive formula \eqref{Ito01} from it, 
which corresponds to a well-known It\^o formula 
for general semimartingales, and is valid without restrictive 
conditions on the It\^o-L\'evy processes $X$  
and on the functions $\phi$. 

In the second part of the paper we discuss 
infinite dimensional generalisations of the It\^o formula \eqref{Ito01} from point 
of view of applications in stochastic PDEs (SPDEs).  
In the theory of 
parabolic SPDEs, arising in nonlinear filtering theory, 
the solutions $v=v_t(x)$ of SPDEs 
have the stochastic differentials 
\begin{equation}                                                                   \label{0}
dv_t(x)=(f^0_t(x)+\sum_{i=1}^d\tfrac{\partial}{\partial x^i} f^i_t(x))\,dt
+\sum_{r}g^{r}_t(x)\,dm_t^r 
\end{equation}
with appropriate random functions $f^{\alpha}$ and $g^r$ of $t\in[0,T]$ 
and $x=(x^1,...,x^d)\in\bR^d$, and 
a sequence of martingales $(m^i)_{i=1}^{\infty}$ is.  This stochastic differential is understood 
in a weak sense, i.e., for each smooth function $\varphi$ with compact support on $\bR^d$ 
we have the stochastic differential 
$$
d(v_t,\varphi)=((f^0_t,\varphi)-\sum_i(f^i_t,\tfrac{\partial}{\partial x^i}\varphi))\,dt
+\sum_{r}(g^{r}_t,\varphi)\,dm_t^r, 
$$
where $(u,v)$ denotes the Lebesgue integral over $\bR^d$ of the product $uv$   
for functions $u$ and $v$ of $x\in\bR^d$. In the $L_2$-theory of SPDEs 
$f^{\alpha}$ and $g^r$ are $L_2(\bR^d,\bR)$-valued functions of $(\omega,t)$, satisfying appropriate 
measurability conditions, and to get a priori estimates, 
a suitable formula for $|v|^2_{L_2}$ plays crucial roles. Such a formula in an abstract setting was 
first obtained in \cite{P1975} when $(m^i)_{i=1}^{\infty}$ is a sequence of independent 
Wiener processes. 
The proof in \cite{P1975} is connected with the theory of SPDEs developed in 
\cite{P1975}.      
A direct proof was given in \cite{KR1981}, which was generalised in \cite{GK1982} 
to the case of square integrable martingales $m=(m^i)$.  A nice short proof was presented 
in \cite{K2013}, and 
further generalisations can be found, for example,  in \cite{GS2017} and \cite{PR}. 
The above results  on It\^o formula 
are  
used in the $L_2$-theory of linear and nonlinear SPDEs to obtain existence, uniqueness and regularity results 
under various assumptions 
see, e.g., \cite{G1982}, \cite{KR1981},  \cite{P1975}, 
\cite{PR} and \cite{Ro}.  
To have a similar tool for studying solvability, uniqueness and regularity problems 
for solutions in $L_p$-spaces for $p\neq2$ one should establish a suitable formula  
for $|v_t|^p_{L_p}$, which was first achieved in Krylov \cite{K2010} for $p\geq2$ 
when $(m^{i})_{i=1}^{\infty}$ is a sequence of independent Wiener processes. 

In section \ref{section L_p Ito} we present a generalisation of the main result from Krylov \cite{K2010} 
to the case when the stochastic differential of $v_t$ is of the form 
\begin{equation}                                                                   \label{00}
dv_t(x)=(f^0_t(x)+\sum_{i=1}^d\tfrac{\partial}{\partial x^i} f^i_t(x))\,dt
+\sum_{r}g^{r}_t(x)\,dw_t^r+\int_{Z}h_t(z,x)\tilde \pi(dz,dt),  
\end{equation}
where $\tilde\pi(dz,dt)$ is a Poisson martingale measure with a $\sigma$-finite characteristic measure 
$\mu$ on a measurable space $(Z,\cZ)$, and $h$ is a function on 
$\Omega\times[0,T]\times Z\times \bR^d$. This is  Theorem \ref{theorem1} below, which is 
a slight generalisation of Theorem 2.2  on It\^o's formula from \cite{GW Ito} for $|v_t|^p_{L_p}$ 
for $p\geq2$. We prove it 
by adapting ideas and methods from Krylov \cite{K2010}. In particular, we 
use the finite dimensional It\^o's formula \eqref{Itop} below 
for $|v^{\varepsilon}_t(x)|^p$ for each $x\in\bR^d$, where $v_t^{\varepsilon}$ is an  
approximation of $v_t$ obtained by smoothing it in $x$. Hence we integrate both sides of the formula 
for $|v^{\varepsilon}_t(x)|^p$ over $\bR^d$, change the order of deterministic and stochastic integrals, 
integrate by parts in terms containing derivatives of smooth approximations of $f^i$, and finally 
we let $\varepsilon\to0$. Though the idea of the proof is simple, there are several technical difficulties 
to implement it. We sketch the proof of Theorem \ref{theorem1} in section \ref{section L_p Ito}, 
further details of the proof can be found in 
\cite{GW Ito}. Theorem \ref{theorem1} plays a crucial role 
in proving existence, uniqueness and regularity results in \cite{GW2019} 
for solutions to stochastic integro-differential equations. In \cite{GW2019} 
instead of a single random field $v_t(x)$ we have to deal with a system of random fields 
$v^i_t(x)$ for $i=1,2,...,M$, and we need estimates for $||\sum_{i}|v^i|^2|^{1/2}|_{L_p}$. 
This is why in Theorem \ref{theorem1} we consider a system a random fields $v^i$, $i=1,2,...,M$.

There are known theorems in the literature on It\^o's 
formula for semimartingales with values 
in separable Banach spaces, see for example,  \cite{BNV}, \cite{R2006}, \cite{VV}, 
\cite{ZBH} and \cite{ZBL}.  
In some directions these results are more general than Theorem \ref{theorem1}, but 
they do not cover it.  
In \cite{BNV} and \cite{VV}  
only continuous semimartingales are considered and their differential 
does not contain $D_if^i\,dt$ 
terms.  In \cite{R2006},  \cite{ZBH} and \cite{ZBL} semimartingales 
containing stochastic integrals  
with respect to Poisson random measures and 
martingale measures are considered, but they do not contain terms corresponding to 
$D_if^i$.  Thus the It\^o formula in these papers cannot be applied 
to $|v_t|_{L_p}^p$ when the stochastic differential $d v_t$ is given by 
\eqref{00}. 

In conclusion we present some notions and notations. 
All random elements are given on a fixed complete probability space 
$(\Omega,\cF,P)$ equipped with a right-continuous filtration $(\cF_t)_{t\geq0}$ 
such that $\cF_0$ contains all $P$-zero sets of $\cF$. The $\sigma$-algebra  
of the predictable subsets of $\Omega\times[0,\infty)$ is denoted by $\cP$. 
We are given a sequence 
$w=(w^1_t,w^2_t,...)_{t\geq0}$ of $\cF_t$-adapted independent Wiener processes 
$w^r=(w^r_t)_{t\geq0}$,  such that $w_t-w_s$ is independent of $\cF_s$ for any $0\leq s\leq t$. 
For an integer $m\geq1$ we are given also a sequence of 
independent Poisson random measures $\pi^k(dz,dt)$ 
on $[0,\infty)\times Z^k$, 
with intensity measure $\mu^k(dz)\,dt$ for $k=1,2,..m$, where $\mu^k$ is a $\sigma$-finite measure 
on a measurable space $(Z^k,\cZ^k)$ with a countably generated  $\sigma$-algebra 
$\cZ^k$. We assume that the process $\pi^k_t(\Gamma):=\pi^k(\Gamma\times(0,t])$,  
$t\geq0$, is $\cF_t$-adapted and $\pi^k_t(\Gamma)-\pi^k_s(\Gamma)$ is independent 
of $\cF_s$ for any $0\leq s\leq t$  and $\Gamma\in\cZ^k$ such that $\mu^k(\Gamma)<\infty$. 
We use the notation $\tilde\pi^k(dz,dt)=\pi^k(dz,dt)-\mu^k(dz)dt$ for the {\it compensated 
Poisson random measure}, and set 
$\tilde\pi^k_t(\Gamma)=\tilde\pi^k(\Gamma\times(0,t])=\pi_t^k(\Gamma)-t\mu^k(\Gamma)$ for $t\geq0$ 
and $\Gamma\in \cZ$ 
such that $\mu^k(\Gamma)<\infty$. If $m=1$ then we write 
$\pi$, $\tilde\pi$, $Z$, $\cZ$ and  $\mu$ in place of $\pi^1$, $\tilde\pi^1$, $Z^1$, $\cZ^1$ and $\mu^1$, 
respectively. For basic results concerning stochastic integrals 
with respect to Poisson random measures and Poisson martingale measures  
we refer to \cite{A2009} and \cite{IW2011}. 

Let $M>0$ be an integer. The space of sequences
$\nu=(\nu^{1},\nu^{2},...)$
of vectors $\nu^{k}\in\bR^{M}$ with finite norm
$$
|\nu|_{\ell_{2} }=\big(
\sum_{k=1}^{\infty}|\nu^k|^{2}\big)^{1/2}
$$
is denoted by $\ell_2=\ell_2(\bR^M)$, and by $l_2$ when $M=1$.
We use the notation $D_i$ to denote the $i$-th derivative, i.e.
$$
D_i=\frac{\partial}{\partial x_i},\quad i=1,2,...,M.
$$ 
For vectors $v$ from Euclidean spaces, $|v|$ means the Euclidean norm of $v$. 
The space of smooth functions with compact support in $\bR^M$ is denoted by 
$C^{\infty}_0(\bR^M)$. For integers $k\geq1$ the notation $C^k(\bR^M)$ means 
the space of functions on $\bR^M$ whose derivatives up to order $k$ exist 
and are continuous, and $C_b^k(\bR^M)$ denotes the space of functions on 
$\bR^M$ whose derivatives up to order $k$ are bounded continuous functions. 
When we talk about the derivatives up to order $k$ of a function $f$ then among these 
derivatives we always consider the `` zero order derivative" of $f$, i.e., $f$ itself.

\mysection{It\^o formulas in finite dimensions}                         \label{section formulation}

We consider an $\bR^M$-valued semimartigale 
$X=(X^1_t,...,X^M_t)_{t\geq0}$ given by 
$$
X_t=X_0+\int_0^tf_s\,ds
+\int_0^tg_s^{r}\,dw_s^r
$$
\begin{equation}                                                                          \label{1.9.2}
+\sum_{k=1}^m\int_0^t\int_{Z^k}\bar h^k_s(z)\,\pi^k(dz,ds)
+\sum_{k=1}^m\int_0^t\int_{Z^k}h^k_s(z)\,\tilde{\pi}^k(dz,ds), \quad\text{for $t\geq0$},  
\end{equation}
where $X_0$ is 
an $\bR^M$-valued $\mathcal{F}_0$-measurable random variable, 
$f=(f^i_t)_{t\geq0}$ and $g=(g^{ir}_t)_{t\geq0}$ are predictable 
processes with values in $\bR^M$ and $\ell_2=\ell_2(\bR^M)$, respectively, 
$\bar h^k=(\bar h_t^{ik}(z))_{t\in[0,T]}$ and $h^k=(h_t^{ik}(z))_{t\geq0}$ are 
$\bR^M$-valued $\cP\otimes\cZ$-measurable functions  
on $\Omega\times\bR_+\times Z$ for every $k=1,2,...,m$, 
such that almost surely for every $k=1,2,...,m$
\begin{equation}                                                             \label{integrands}
\bar h_t^{ik}(z)h_t^{jk}(z)=0 \quad\text{for $i,j=1,2,...,M$,  
for all $t\geq0$ and $z\in Z$},   
\end{equation}
and 
\begin{equation}                                                             \label{integrals}
\sum_{k=1}^m\left(\int_0^T\int_{Z_k}|\bar h^k_t(z)|\,\pi^k(dz,dt)+
\int_0^T\int_{Z_k}|h^k_t(z)|^2\mu^k(dz)\,dt\right)<\infty,\quad
\int_0^T|f_t|+|g_t|^2_{\ell_2}\,dt<\infty
\end{equation}
for every $T>0$.  Here and later on, unless otherwise indicated, 
the summation convention with respect 
to repeated integer-valued indices is used, 
i.e., $g^r_s\,dw^r_s$ means $\sum_rg^r_s\,dw^r_s$.

The following It\^o's formula is well-known for $m=1$. 
\begin{theorem}                                                            \label{theorem standard}
Let conditions \eqref{integrands} and 
\eqref{integrals} hold and assume there is a constant $K$ such that $|h^k|\leq K$ for all 
$(\omega,t,z)\in\Omega\times \bR_{+}\times Z$ and $k=1,2,...,m$. Then for any 
$\phi\in C^2(\bR^M)$ the process $(\phi(X_t))_{t\geq0}$ is a semimartingale 
such that 
$$
\phi(X_t)=\phi(X_0)
+\int_0^tf^i_sD_i\phi(X_s)+\tfrac{1}{2}g_s^{ir}g_s^{jr}D_{i}D_{j}\phi(X_s)\,ds
+\int_0^tg^{ir}_sD_i\phi(X_s)\,dw^r_s
$$
$$
+\sum_{k=1}^m\int_0^t\int_{Z_k}\phi(X_{s-}+\bar h^k_s(z))-\phi(X_{s-})\,\pi^k(dz,ds)
$$
$$
+\sum_{k=1}^m\int_0^t\int_{Z_k}\phi(X_{s-}+h^k_s(z))-\phi(X_{s-})\,\tilde \pi^k(dz,ds)
$$
\begin{equation}                                                               \label{formula standard}
+\sum_{k=1}^m\int_0^t\int_{Z_k}
\left(
\phi(X_s+h^k_s(z))-\phi(X_s)-h^{ik}_s(z)D_i\phi(X_s)
\right)
\,\mu^k(dz)\,ds 
\end{equation}
holds almost surely for all $t\geq0$. 
\end{theorem}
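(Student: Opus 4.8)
The plan is to deduce \eqref{formula standard} from the classical It\^o formula for general $\bR^M$-valued semimartingales applied to $\phi\in C^2(\bR^M)$ (see, e.g., \cite{IW2011}), which states that
$$\phi(X_t)=\phi(X_0)+\int_0^t D_i\phi(X_{s-})\,dX_s^i+\tfrac12\int_0^t D_iD_j\phi(X_{s-})\,d\langle X^{c,i},X^{c,j}\rangle_s+\sum_{0<s\le t}\big(\phi(X_s)-\phi(X_{s-})-D_i\phi(X_{s-})\,\Delta X^i_s\big),$$
where $X^c$ is the continuous local martingale part of $X$. From \eqref{1.9.2} I would read off $X^{c,i}_t=\int_0^t g^{ir}_s\,dw^r_s$, so that $d\langle X^{c,i},X^{c,j}\rangle_s=g^{ir}_sg^{jr}_s\,ds$ (the distinct $w^r$ being independent), which produces the second-order $ds$-term of \eqref{formula standard}.

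Next I would expand $\int_0^t D_i\phi(X_{s-})\,dX^i_s$ by associativity of the stochastic integral into the four pieces dictated by \eqref{1.9.2}. In the $ds$- and $dw^r$-integrals one may replace $X_{s-}$ by $X_s$, since they coincide off the at most countable set of jump times; this yields the $f^i_sD_i\phi(X_s)\,ds$ and $g^{ir}_sD_i\phi(X_s)\,dw^r_s$ terms, together with the two jump integrals $\sum_k\int_0^t\int_{Z^k} D_i\phi(X_{s-})\bar h^{ik}_s(z)\,\pi^k(dz,ds)$ and $\sum_k\int_0^t\int_{Z^k} D_i\phi(X_{s-})h^{ik}_s(z)\,\tilde\pi^k(dz,ds)$.

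The role of hypothesis \eqref{integrands} appears in the jump sum. Since the integrals against $\pi^k$ and $\tilde\pi^k$ are driven by the same Poisson measure, the jump of $X$ at an atom $(s,z)$ of $\pi^k$ equals $\bar h^k_s(z)+h^k_s(z)$; but \eqref{integrands} forces at most one of these vectors to be nonzero, so $\phi(X_{s-}+\bar h^k_s+h^k_s)-\phi(X_{s-})$ splits additively as $[\phi(X_{s-}+\bar h^k_s)-\phi(X_{s-})]+[\phi(X_{s-}+h^k_s)-\phi(X_{s-})]$, and likewise for the linear correction $D_i\phi(X_{s-})\,\Delta X^i_s$. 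Writing the resulting sums over atoms as integrals against each $\pi^k$ (distinct Poisson measures having a.s. no common atoms), the jump sum becomes a $\bar h$-part and an $h$-part. Recombining is then bookkeeping: the $\bar h$-part added to the $\bar h$-piece of $\int D_i\phi\,dX$ collapses to $\sum_k\int\int(\phi(X_{s-}+\bar h^k)-\phi(X_{s-}))\,\pi^k$, the third line of \eqref{formula standard}; for the $h$-terms I would rewrite the jump integral $\int\int(\cdots)\,\pi^k$ as $\int\int(\cdots)\,\tilde\pi^k+\int\int(\cdots)\,\mu^k\,ds$, combine the compensated part with the $\tilde\pi^k$-piece of $\int D_i\phi\,dX$ to obtain the fourth line, and keep the remaining $\mu^k\,ds$-integral (with $X_{s-}$ replaced by $X_s$) as the fifth line.

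The main obstacle is precisely the integrability bookkeeping that legitimises this last splitting, since breaking a single $\pi^k$-integral into a compensated integral and a $\mu^k\,ds$-integral is valid only once each piece is separately finite. Here I would use that the c\`adl\`ag path $(X_s)_{s\le T}$ is almost surely bounded, so that $D^2\phi$ is bounded on its (random) range; combined with the standing bound $|h^k|\le K$ and a second-order Taylor expansion this gives $|\phi(X_{s-}+h^k_s(z))-\phi(X_{s-})-h^{ik}_s(z)D_i\phi(X_{s-})|\le C(\omega,T)\,|h^k_s(z)|^2$, which is $\mu^k\,ds$-integrable by the second bound in \eqref{integrals}. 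This makes the fifth-line integral absolutely convergent and the fourth-line integral a genuine (locally square-integrable) martingale, while the $\bar h$-integrals are controlled by the finiteness of $\int\int|\bar h^k|\,\pi^k$ in \eqref{integrals} together with the continuity of $\phi$. With these estimates the pieces assemble exactly into \eqref{formula standard}.
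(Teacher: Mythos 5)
Your proof is correct, but it takes a genuinely different route from the paper's. You invoke the general It\^o formula for $\bR^M$-valued semimartingales (jump sum plus continuous quadratic variation) as a black box and then re-assemble its terms: identify $X^{c,i}=\int g^{ir}\,dw^r$, split $\int D_i\phi(X_{s-})\,dX^i$ by associativity, use \eqref{integrands} to split each jump increment additively into a $\bar h$-part and an $h$-part, and finally compensate the $J$-terms against $\pi^k$, justifying the split $\int\!\!\int J\,\pi^k=\int\!\!\int J\,\tilde\pi^k+\int\!\!\int J\,\mu^k\,ds$ by the Taylor bound $|J|\le C(\omega,T)|h|^2$ coming from the a.s.\ boundedness of the c\`adl\`ag path together with $|h^k|\le K$ --- this is exactly where the extra boundedness hypothesis enters, and your identification of this as the main obstacle is on target. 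The paper instead proves the formula from scratch in the style of Ikeda--Watanabe: it localizes the jumps to sets $Z^k_n$ of finite $\mu^k$-measure, enumerates the (then discrete) jump times $\rho^k_i$, $\tau_i$, applies the It\^o formula for continuous It\^o processes between consecutive jumps, shows via the densities and independence of the $\rho^k_i$ that distinct Poisson measures a.s.\ never jump simultaneously (you use the same fact, stated as ``no common atoms''), splits the jump sum by \eqref{integrands}, and passes to the limit $n\to\infty$. Notably, your route is essentially the alternative the authors themselves sketch in the Remark following Theorem \ref{theorem0}, where they propose rewriting the Dellacherie--Meyer formula (Theorem VIII.27 in \cite{DM1982}) --- though they suggest doing so first for finite $\mu^k$ and then approximating, whereas you handle $\sigma$-finite $\mu^k$ directly, which is legitimate since the general semimartingale formula and the absolute convergence of the jump sum do not see the finiteness of $\mu^k$. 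What each approach buys: yours is shorter and conceptually transparent but leans on the heavier semimartingale machinery and requires the compensation bookkeeping you supply; the paper's is self-contained, uses only the continuous It\^o formula, and its localization structure is what lets it avoid deep semimartingale theory altogether. One minor inaccuracy: the general semimartingale It\^o formula in the form you quote is not really in \cite{IW2011} (which contains the jump-diffusion version the paper follows); the correct reference is \cite{DM1982}, as in the paper's remark.
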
 
\begin{proof}
This theorem, with a finite dimensional Wiener process $w=(w^1,...,w^{d_1})$ 
in place of an infinite sequence of independent Wiener processes  and for $m=1$ 
is proved, for example, in \cite{IW2011}, see Theorem 5.1 in chapter II.  Following this proof 
 with appropriate changes one can easily prove 
the above theorem as follows. 
Since $\mu^k$ is $\sigma$-finite for $k=1,2,...,m$, for each $k$ we have an 
increasing sequence $(Z^k_n)_{n=1}^{\infty}$ of sets  $Z_n^k\in\cZ^k$ such that 
$Z^k=\cup_{n=1}^{\infty}Z_n^k$ 
and $\mu^k(Z_n^k)<\infty$ for every $n$. 
For a fixed integer $n\geq1$ let $\rho^k_1<\rho^k_2<...$ denote the increasing sequence of times 
where the jumps of $N^k:=(\pi^k_t(Z_n^k))_{t\geq 0}$ occur. Similarly, 
let $\tau_1<\tau_2<...$ be the jump times 
of the process $N=\sum_{k=1}^mN_k$. Then $\rho^k_i$ and $\tau_i$ are stopping times 
for every $k=1,2,...,m$ and $i\geq1$, and for almost every $\omega\in\Omega$ the set 
of time points $\{\tau_i(\omega):i\geq1\}$ contains all points of discontinuities of 
$(X^n_t(\omega))_{t\geq0}$, where the process $X^n$ is defined by 
$$
X^n_t=X_0+\int_0^tf_s\,ds
+\int_0^tg_s^{r}\,dw_s^r+\sum_{k=1}^m\int_0^t\int_{Z^k}\bar h^k_s(z){\bf1}_{Z^k_n}(z)\,\pi^k(dz,ds)
$$
\begin{equation}                                                                                         \label{eq_n}
+\sum_{k=1}^m\int_0^t\int_{Z^k}h^k_s(z){\bf1}_{Z^k_n}(z)\,{\pi}^k(dz,ds)
-V^n_t
\quad\text{for $t\geq0$}   
\end{equation}
with 
$$
V^n_t:=\sum_{k=1}^m\int_0^t\int_{Z^k}h^k_s(z){\bf1}_{Z^k_n}(z)\,\mu^k(dz)\,ds. 
$$
Clearly, $\phi(X^n_t)=\phi(X^n_0)+A^n_t+B_t^n$ with 
$$
A^n_t=\sum_{i\geq1}\big(\phi(X^n_{\tau_i\wedge t})-\phi(X^n_{\tau_i\wedge t-})\big),\quad 
B^n_t=\sum_{i\geq1}\big(\phi(X^n_{\tau_i\wedge t-})-\phi(X^n_{\tau_{i-1}\wedge t})\big), 
$$
where we set $\tau_0:=0$ and $X^n_{\tau_i\wedge t-}:=X^n_{\tau_i-}$ for $t\geq\tau_i$ 
and $X^n_{\tau_i\wedge t-}:=X^n_t$ for $t<\tau_i$. 
By It\^o's formula for It\^o processes we have 
$$
\phi(X^n_{\tau_i\wedge t-})-\phi(X^n_{\tau_{i-1}\wedge t})
=\int_{\tau_{i-1}\wedge t}^{\tau_i\wedge t-}D_l\phi(X^n_s)f^l_s
+\tfrac{1}{2}D_{jl}\phi(X^n_s)g^{jr}g^{lr}\,ds
$$
$$
+\int_{\tau_{i-1}\wedge t}^{\tau_i\wedge t-}D_l\phi(X^n_s)g^{lr}_s\,dw^r_s
-\int_{\tau_{i-1}\wedge t}^{\tau_i\wedge t-}D_l\phi(X^n_s)\,dV^n_s, 
$$
which gives 
$$
B^n_t=\int_{0}^tD_l\phi(X^n_s)f^l_s
+\tfrac{1}{2}D_{jl}\phi(X^n_s)g^{jr}g^{lr}\,ds
$$
\begin{equation}                                                                      \label{ContIto}
+\int_{0}^{t}D_l\phi(X^n_s)g^{lr}_s\,dw^r_s
-\int_{0}^{t}D_l\phi(X^n_s)\,dV^n_s.  
\end{equation}  
Notice that $\rho^k_i$ has a density with respect to the Lebesgue measure for $i\geq1$,  
and $\rho^k_i$ and $\rho^l_j$ are independent for $k\neq l$. Hence $P(\rho^k_i=\rho^l_j)=0$ 
for $k\neq l$ and positive integers $i,j$. Consequently, for almost every $\omega\in\Omega$
we have $\{\tau_i(\omega):i\geq1\}=\cup_{k=1}^{m}\{\rho^k_i(\omega):i\geq1\}$ such that the sets in the union 
are almost surely pairwise  disjoint.  
Hence, taking also into account condition \eqref{integrands}, we get that almost surely
$$
A^n_t=\sum_{k=1}^m\sum_{i\geq1}
\big(\phi(X^n_{\rho^k_i\wedge t})-\phi(X^n_{\rho^k_i\wedge t-})\big)
=\bar A^n_t+\tilde A^n_t
$$
for all $t\geq0$, where 
$$
\bar A^n_t=\sum_{k=1}^m
\int_0^t\int_{Z^k}\big(\phi(X^n_{s-}+\bar h^k_s(z))-\phi(X^n_{s-})\big)
{\bf1}_{Z^n_k}(z)\,\pi^k(dz,ds), 
$$
$$
\tilde A^n_t=\sum_{k=1}^m
\int_0^t\int_{Z^k}\big(\phi(X^n_{s-}+h^k_s(z))-\phi(X^n_{s-})\big)
{\bf1}_{Z^n_k}(z)\,\pi^k(dz,ds) 
$$
$$
=\sum_{k=1}^m
\int_0^t\int_{Z^k}\big(\phi(X^n_{s-}+h^k_s(z))-\phi(X^n_{s-})\big)
{\bf1}_{Z^n_k}(z)\,\tilde\pi^k(dz,ds) 
$$
$$
+\sum_{k=1}^m
\int_0^t\int_{Z^k}\big(\phi(X^n_{s-}+h^k_s(z))-\phi(X^n_{s-})\big)
{\bf1}_{Z^n_k}(z)\,\mu^k(dz)\,ds.  
$$
Combining this with \eqref{ContIto} we get 
$$
\phi(X^n_t)=\phi(X_0)+\int_{0}^tD_l\phi(X^n_s)f^l_s
+\tfrac{1}{2}D_{jl}\phi(X^n_s)g^{jr}g^{lr}\,ds+\int_{0}^{t}D_l\phi(X^n_s)g^{lr}_s\,dw^r_s
$$
$$
+\sum_{k=1}^m
\int_0^t\int_{Z^k}\big(\phi(X^n_{s-}+\bar h^k_s(z))-\phi(X^n_{s-})\big)
{\bf1}_{Z^n_k}(z)\,\pi^k(dz,ds), 
$$
$$
+\sum_{k=1}^m
\int_0^t\int_{Z^k}\big(\phi(X^n_{s-}+h^k_s(z))-\phi(X^n_{s-})\big)
{\bf1}_{Z^n_k}(z)\,\tilde\pi^k(dz,ds) 
$$
$$
+\sum_{k=1}^m
\int_0^t\int_{Z^k}\big(\phi(X^n_{s-}+h^k_s(z))-\phi(X^n_{s-})-D_l\phi(X^n_s)h^{lk}_s(z)\big)
{\bf1}_{Z^n_k}(z)\,\mu^k(dz)\,ds.  
$$
Hence we can finish the proof by letting $n\to\infty$ and using standard facts about convergence of 
Lebesgue integrals and stochastic integrals with respect to Wiener processes and random measures. 
\end{proof}

In some publications only the natural conditions \eqref{integrands} and 
\eqref{integrals} are assumed in the formulation of the above theorem, 
but these conditions are not sufficient for 
\eqref{formula standard} to hold, as the following 
simple example shows. 

\begin{example}                                                                                    \label{example1}
Consider a one-dimensional 
semimartingale $(X_t)_{t\in[0,T]}$ given by \eqref{1.9.2} with $f=0$, $g=0$, 
$\bar h=0$ and $h_t(z)={\bf1}_{t>0}t^{-1/4}$, $t\geq0$, $z\in Z=\bR\setminus\{0\}$,  
when $\pi(dz,dt)$ is 
the measure of jumps 
of a standard Poisson process  
and $\tilde\pi(dz,dt)=\pi(dz,dt)-\mu(dz)dt$ 
is its compensated measure, where 
$\mu=\delta_1$ is the Dirac measure on $Z$ concentrated at $1$. Then obviously  
conditions \eqref{integrands} and 
\eqref{integrals} hold, and for $\phi(x)=x^4$ 
the last integrand in \eqref{formula standard} is 
\begin{equation*}
|X_{s-}+h_s(z)|^4-|X_{s-}|^4-4X_{s-}^3h_s(z)
=\sum_{i=1}^3c_i(s,z)
\end{equation*}
with
$$
c_1(s,z)=6|X_{s-}|^2{\bf1}_{s>0}s^{-1/2}, 
\quad
c_2(s,z)=4X_{s-}{\bf1}_{s>0}s^{-3/4}, 
\quad
c_3(s,z)={\bf1}_{s>0}s^{-1}. 
$$
Clearly, 
$$
\int_0^t\int_Z|c_i(s,z)|\,\mu(dz)\,ds<\infty\quad \text{for $i=1,2$}, 
\quad
\text{and} 
\quad
\int_0^t\int_Zc_3(s,z)\,\mu(dz)\,ds=\infty 
$$
for every $t>0$, 
which shows that the last integral in \eqref{formula standard} is infinite. 
Similarly, one can show that almost surely 
$$
\int_0^t\int_Z(|X_{s-}+h_s(z)|^4-|X_{s-}|^4)^2\,\mu(dz)\,ds=\infty
\quad\text{for every $t>0$}, 
$$
 which means the stochastic integral with respect to 
$\tilde\pi(dz,ds)$ in \eqref{formula standard} does not exist. 
\end{example}

It is easy to see that the last two integrals in 
 \eqref{formula standard} are well-defined as It\^o and Lebesgue integrals, 
respectively, under the additional boundedness assumption on $h$. 
Instead of this extra condition  
on $h$ one can make additional assumptions on $\phi$ to ensure that  
formula \eqref{formula standard} holds. 
It is sufficient to assume that the derivatives of $\phi$ up to second order are bounded. 
Such a condition, however, excludes the applicability of It\^o's formula to power functions 
$\phi(x)=|x|^p$ for $p\geq2$. Notice that for any $\phi\in C^2(\bR^M)$ the conditions 
\begin{equation}                                               \label{condition1}
\sum_{k=1}^m\int_0^T\int_{Z_k}|\phi(X_s+h^k_s(z))-\phi(X_s)|^2\,\mu^k(dz)\,ds<\infty
\end{equation}
and 
\begin{equation}                                                \label{condition2}
\sum_{k=1}^m\int_0^T\int_{Z_k}|\phi(X_s+h^k_s(z))-\phi(X_s)-h^k_s(z)\nabla\phi(X_s)|
\,\mu^k(dz)\,ds<\infty
\quad \rm{(a.s.)}
\end{equation}
ensure the existence of the last two integrals in
 \eqref{formula standard} respectively. 
Thus we can expect that under conditions 
\eqref{integrands}-\eqref{integrals} and 
\eqref{condition1}-\eqref{condition2} formula \eqref{formula standard} 
is valid. 

\begin{theorem}                                                                                   \label{theorem ItoBL}
Let conditions \eqref{integrands}-\eqref{integrals} 
and \eqref{condition1}-\eqref{condition2} hold.  
Assume $\phi\in C^2(\bR^M)$.  Then 
$\phi(X_t)$ is a semi-martingale such that \eqref{formula standard} holds  
almost surely for all $t\geq0$.
\end{theorem}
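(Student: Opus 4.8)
The plan is to reduce to Theorem \ref{theorem standard} by truncating the (possibly unbounded) compensated jump coefficient $h$ and then passing to the limit, using conditions \eqref{condition1}--\eqref{condition2} to control the error coming from the large jumps. Fix $T>0$; it suffices to prove \eqref{formula standard} on $[0,T]$. For $N\ge1$ set $h^{k,N}_s(z):=h^k_s(z)\mathbf 1_{|h^k_s(z)|\le N}$, which is bounded by $N$, and let $X^N$ be the semimartingale defined by \eqref{1.9.2} with $h^{k,N}$ in place of $h^k$ and with $f,g,\bar h^k$ unchanged. Since $\bar h^{ik}h^{jk,N}=\bar h^{ik}h^{jk}\mathbf 1_{|h^k|\le N}=0$, conditions \eqref{integrands}--\eqref{integrals} hold for $X^N$, and $h^{k,N}$ is bounded, so Theorem \ref{theorem standard} yields formula \eqref{formula standard} for $\phi(X^N_t)$ with $h^{k,N}$ replacing $h^k$ in every jump term.

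First I would record that $X^N\to X$ uniformly on $[0,T]$ in probability: by Doob's inequality and It\^o's isometry $E\sup_{t\le T}|X_t-X^N_t|^2\lesssim\sum_k E\int_0^T\int_{Z^k}|h^k|^2\mathbf 1_{|h^k|>N}\,\mu^k(dz)\,ds$, which tends to $0$ by dominated convergence when the right side is finite, and in general after stopping at $\sigma_n:=\inf\{t:\sum_k\int_0^t\int_{Z^k}|h^k|^2\mu^k(dz)\,ds\ge n\}$ and letting $n\to\infty$. Passing to a subsequence I may assume $\sup_{t\le T}|X^N_t-X_t|\to0$, hence $X^N_{t-}\to X_{t-}$ uniformly, almost surely, so that $\phi(X^N_t)\to\phi(X_t)$. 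After localizing by $\tau_R:=\inf\{t:|X_t|\ge R\}\wedge T$ so that $X$ (and, for large $N$, $X^N$) stays in a fixed ball, the drift, the It\^o--Wiener term and the $\pi^k$-integral against $\bar h^k$ converge to the corresponding terms in \eqref{formula standard}: the continuous terms by continuity of $D_i\phi,D_iD_j\phi$ and dominated convergence, and the $\bar h^k$-term because \eqref{integrals} forces only finitely many jumps of $\pi^k$ on $[0,\tau_R]$ to contribute, on each of which $\phi(X^N_{s-}+\bar h^k)-\phi(X^N_{s-})\to\phi(X_{s-}+\bar h^k)-\phi(X_{s-})$.

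The crux is the limit in the compensated integral and in its compensator, where $h$ is unbounded; here I would use \eqref{condition1} and \eqref{condition2} as integrable envelopes together with a two--parameter truncation. For the $\tilde\pi^k$--integral, It\^o's isometry reduces matters to showing that
$$\sum_k\int_0^{t\wedge\tau_R}\!\!\int_{Z^k}\big|[\phi(X^N_{s-}+h^{k,N})-\phi(X^N_{s-})]-[\phi(X_{s-}+h^k)-\phi(X_{s-})]\big|^2\mu^k(dz)\,ds\to0$$
in probability. Splitting $Z^k=\{|h^k|\le R'\}\cup\{|h^k|>R'\}$: on $\{|h^k|\le R'\}$ the arguments stay in a fixed ball, so a Taylor estimate bounds the integrand by $C_{R,R'}\,|X^N-X|^2|h^k|^2\mathbf 1_{|h^k|\le R'}$, whose integral vanishes as $N\to\infty$ by $\sup|X^N-X|\to0$ and \eqref{integrals}; on $\{|h^k|>R'\}$ the integrand is dominated, uniformly in large $N$, by a constant multiple of $|\phi(X_{s-}+h^k)-\phi(X_{s-})|^2\mathbf 1_{|h^k|>R'}$ together with the same expression in $X^N$, whose integral tends to $0$ as $R'\to\infty$ by \eqref{condition1} and dominated convergence. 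Letting first $N\to\infty$ and then $R'\to\infty$ gives the claim. The compensator term is handled identically, now using the Taylor remainder $|\phi(a+h)-\phi(a)-h^iD_i\phi(a)|$ and the $L_1$ envelope \eqref{condition2} in place of the $L_2$ envelope \eqref{condition1}.

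Finally, I would assemble the four convergences, let $R\to\infty$ (so $\tau_R\to T$) to remove the localization, and identify the limit of the identity for $\phi(X^N_t)$ with \eqref{formula standard} for $\phi(X_t)$, valid a.s. for all $t\le T$ and hence, $T$ being arbitrary, for all $t\ge0$; the semimartingale property follows since each limiting term is a sum of a finite--variation process and a local martingale. I expect the main obstacle to be precisely this unbounded--$h$ limit in the compensated jump integral: a single dominating function uniform in $N$ is not available, because $\phi\in C^2$ provides no global modulus of continuity for $\nabla\phi$, which is why the argument must separate a bounded--jump region (controlled by local boundedness of $\phi$ and by $X^N\to X$) from a large--jump tail (controlled by the a priori integrability \eqref{condition1}--\eqref{condition2}), and take the two limits in the correct order.
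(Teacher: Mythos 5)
Your reduction to Theorem \ref{theorem standard} by truncating $h$ is a natural first move, and your treatment of the bounded-jump region $\{|h^k|\le R'\}$ is correct. The genuine gap is in the tail region $\{|h^k|>R'\}$. There you must control, uniformly in $N$, the quantities $\int_0^{t\wedge\tau_R}\int_{Z^k}|I^{h^{k}_s(z)}\phi(X^N_{s-})|^2{\bf1}_{|h^k_s(z)|>R'}\,\mu^k(dz)\,ds$ and the analogous $J$-terms against $\mu^k(dz)\,ds$, but conditions \eqref{condition1}--\eqref{condition2} are hypotheses about compositions along the \emph{true} path $X$ only; they say nothing about $X^N$. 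Since $\phi$ is merely $C^2$, with no global bound or modulus of continuity for $\phi$ or $\nabla\phi$, the value $\phi(X^N_{s-}+h^k_s(z))$ is not comparable to $\phi(X_{s-}+h^k_s(z))$ uniformly over large $|h^k|$, even though $\sup_{t\le T}|X^N_t-X_t|\to0$: the points $X_{s-}+h^k_s(z)$ run off to infinity, where an arbitrarily small shift of the argument can change $\phi$ by an arbitrarily large amount, and the Taylor bound for the difference involves $\sup|D^2\phi|$ over a ball of radius growing with $|h^k_s(z)|$, hence is not uniform in $N$. Consequently no integrable majorant independent of $N$ exists for the tail terms, and neither order of limits rescues the argument: for fixed $R'$, letting $N\to\infty$ in the tail integral already requires such a majorant, while letting $R'\to\infty$ first would require $\sup_N$ of the tail integrals to vanish, which is exactly the unavailable uniform bound. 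You flag this obstacle yourself, but the two-region splitting only relocates it; it does not remove it.

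The paper's proof avoids the problem by decoupling the two approximations. First it proves \eqref{formula standard} for $\phi\in C_b^2(\bR^M)$ with the truncation $h^{(n)}$; there the global boundedness of $D\phi$ and $D^2\phi$ yields the envelope $C|h_s(z)|^2$, integrable by \eqref{integrals} alone, so \eqref{condition1}--\eqref{condition2} are not needed and the tail issue never arises. Then, keeping $X$ and $h$ fixed (no further truncation of $h$), it cuts off $\phi$ by $\phi_n=\phi\,\zeta(\cdot/n)$ and passes to the limit using the product identities $I^{a}(\varphi\phi)=\phi\,I^{a}\varphi+\varphi(\cdot+a)I^{a}\phi$ and $J^{a}(\varphi\phi)=\phi\,J^{a}\varphi+\varphi\,J^{a}\phi+I^a\varphi\,I^a\phi$; these produce dominating functions built from $\tfrac{C}{n}|\phi(X_s)||h_s(z)|$, $\tfrac{C}{n}|h_s(z)|^2$, $|I^{h_s(z)}\phi(X_s)|$, $|I^{h_s(z)}\phi(X_s)|^2$ and $|J^{h_s(z)}\phi(X_s)|$, all integrable precisely by \eqref{integrals} and \eqref{condition1}--\eqref{condition2}, because the process argument $X_s$ never moves during this second limit. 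To salvage your scheme you would need to insert this intermediate $C^2_b$ stage (or an equivalent cutoff of $\phi$) before letting $N\to\infty$. A smaller slip: for the $\bar h^k$-term, \eqref{integrals} does not force finitely many contributing jumps on $[0,\tau_R]$, only $\pi^k$-summability of $|\bar h^k|$; convergence there should instead be obtained by dominated convergence with respect to $\pi^k$, using that the essential supremum of $|\bar h^k|$ with respect to $\pi^k$ is almost surely finite.
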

\begin{proof}
This theorem is a slight generalisation of
Theorem 5.2 in \cite{BL1984}. For the convenience of the reader we deduce this theorem 
from Theorem \ref{theorem standard} here. 
For notational simplicity we assume $m=1$, with additional indices 
the case $m>1$ can be proved in the same way. 

For vectors $a=(a^1,....,a^M)\in\bR^M$ and functions $\phi\in C^2(\bR^M)$
we define the functions $I^a\phi$ and $J^a\phi$ by 
\begin{equation}                                                                            \label{def IJ}
I^a\phi(v)=\phi(v+a)-\phi(v),\quad 
J^a\phi(v)=\phi(v+a)-\phi(v)-a^iD_i\phi(v),\quad v\in\bR^M. 
\end{equation}
Assume first $\phi\in C_b^2(\bR^M)$.  Approximate 
$h$ by $h^{(n)}=(h^{1(n)},...,h^{M(n)})$  and define 
$$
X_t^{(n)}=X_0+\int_0^tf_s\,ds
+\int_0^tg_s^{r}\,dw_s^r
+\int_0^t\int_Z\bar{h}_s(z)\,\pi(dz,ds)
+\int_0^t\int_Z h^{(n)}_s(z)\,\tilde{\pi}(dz,ds)
$$
for integers $n\geq1$, where 
$h_t^{i(n)}=-n\vee h_t^{i}\wedge n$. 
Then \eqref{formula standard} holds with $X^{i(n)}_t$ and $h^{i(n)}_t$ 
in place of $X^i_t$ and $h^i_t$, respectively, for each $i=1,2,...,M$.
Clearly, 
$$
\int_0^T\int_Z|h^{(n)}_s(z)-h_s(z)|^2\,\mu(dz)\,ds\rightarrow 0\quad \rm{(a.s.)}
\quad\text{for each $T>0$}, 
$$
which implies
$$
\int_0^t\int_Z h^{(n)}_s(z)\,\tilde{\pi}(dz,ds)\to
\int_0^t\int_Z h_s(z)\,\tilde{\pi}(dz,ds)
$$
in probability uniformly in $t\in[0,T]$.
Consequently for each $T>0$ we have
$$
\sup_{t\in[0,T]}|X^{(n)}_t-X_t|\to 0
$$
in probability.
It is easy to see
$$
\int_0^tf^i_sD_i\phi(X^{(n)}_s)+\tfrac{1}{2}g_s^{ir}g_s^{jr}D_{i}D_{j}\phi(X^{(n)}_s)\,ds
\to\int_0^tf^i_sD_i\phi(X_s)+\tfrac{1}{2}g_s^{ir}g_s^{jr}D_{i}D_{j}\phi(X_s)\,ds,
$$

$$
\int_0^tg^{ir}_sD_i\phi(X^{(n)}_s)\,dw^r_s \to \int_0^tg^{ir}_sD_i\phi(X_s)\,dw^r_s,
$$

$$\int_0^t\int_Z I^{\bar{h}_{s}(z)}\phi(X^{(n)}_{s-})\,\pi(dz,ds)
\to \int_0^t\int_Z I^{\bar{h}_{s}(z)}\phi(X_{s-})\,\pi(dz,ds)$$
in probability uniformly in $t\in[0,T]$ for $T>0$.
Furthermore, by Taylor's formula we have
$$
|J^{h^{(n)}_s(z)}\phi(X^{(n)}_s)| 
\leq  
\int_0^1 (1-\theta)|h^{i(n)}_s(z)h^{j(n)}_s(z)D_{ij}\phi(X^{(n)}_s+\theta h^{(n)}_s(z))|\,d
\leq C|h_s(z)|^2
$$
$$
|I^{h^{(n)}_s(z)}\phi(X^{(n)}_s)|
\leq \int_0^1|\nabla \phi(X^{(n)}_s+\theta h^{(n)}_s(z)) h^{(n)}_s(z)|\,d\theta
\leq C|h_s(z)|^2 
$$
with a constant $C$ independent of $n$.
Hence by Lebesgue's theorem on dominated convergence for $T>0$ we have 
$$
\int_0^T\int_Z |J^{h^{(n)}_s(z)}\phi(X^{(n)}_s)
-J^{h_s(z)}\phi(X_s)|\, \mu(dz)\,ds\to 0\quad\text{for $t\geq0$}
$$
and 
$$
\int_0^T\int_Z|I^{h_s^{(n)}(z)}\phi(X^{(n)}_s)-I^{h_s(z)}\phi(X_s)|^2\,\mu(dz)\,ds
\to 0
\quad\text{for $T\geq0$}
$$
in probability, which implies
$$
\int_0^t\int_ZI^{h^{(n)}_s(z)}\phi(X^{(n)}_{s-})\,\tilde{\pi}(dz,ds)
\to \int_0^t\int_Z I^{h_s(z)}\phi(X_{s-})\,\tilde{\pi}(dz,ds)
$$
in probability uniformly in $t\in[0,T]$ for each $T>0$. 
Hence, letting $n\to \infty$ in \eqref{formula standard} 
with $h^{(n)}$ and $X^{(n)}$ in place of $h$ and $X$, respectively,  
we prove the theorem for $\phi\in C^2_b(\bR^M)$.\\
For $\phi\in C^2(\bR^M)$ we define  $\phi_n$ for integers $n\geq1$ by 
$\phi_n(x)=\phi(x)\zeta(x/n)$, $x\in\bR^M$, 
where $\zeta$ is a smooth function on $\bR^M$ with values in $[0,1]$ such that $\zeta(x)=1$ for 
$|x|\leq 1$ and $\zeta(x)=0$ for $|x|\geq 2$. 
Then  $\phi_n\in C_b^2(\bR^M)$, and therefore \eqref{formula standard} holds with $\phi_n$ 
in place of $\phi$. 
Thus it remains to take limit as $n\to \infty$ for each term in \eqref{formula standard} 
 with $\phi_n$ in place of $\phi$. 
Clearly as $n\to \infty$, we have
$$
\phi_n(x)\to \phi(x),\quad D_i\phi_n(x)\to D_i\phi(x),
\quad D_{ij}\phi_n(x)\to D_{ij}\phi(x)
$$
uniformly on compact subsets of $\mathbb{R}^M$ for $i,j=1,2,...,M$. Hence it is easy to see
$$
\int_0^tf^i_sD_i\phi_n(X_s)+\tfrac{1}{2}g_s^{ir}g_s^{jr}D_{i}D_{j}\phi_n(X_s)\,ds
\to\int_0^tf^i_sD_i\phi(X_s)+\tfrac{1}{2}g_s^{ir}g_s^{jr}D_{i}D_{j}\phi(X_s)\,ds
$$
and
$$
\int_0^tg^{ir}_sD_i\phi_n(X_s)\,dw^r_s \to \int_0^tg^{ir}_sD_i\phi(X_s)\,dw^r_s
$$
in probability, uniformly in $t\in[0,T]$ as $n\to\infty$.
Using the simple identity 
$$
I^{a}(\varphi\phi)(x)=\phi(x)I^{a}\varphi(x)
+\varphi(x+a)I^{a}\phi(x), \quad a,x\in\bR^M
$$
with $\varphi=\phi_n$ and $a=h_s(z)$, we get 
$$
|I^{h_s(z)}\phi_n(X_s)-I^{h_s(z)} \phi(X_s)|
\leq |\phi(X_s)||I^{h_s(z)}\zeta_n(X_s)|
+|1-\zeta_n(X_s+h_s(z))||I^{h_s(z)}\phi(X_s)| 
$$
$$
\leq  \frac{C}{n}|\phi(X_s)||h_s(z)|
+|1-\zeta_n(X_s+h_s(z))||I^{h_s(z)}\phi(X_s)|
$$
\begin{equation}                                                     \label{Ih}
\leq 
 \frac{C}{n}|\phi(X_s)||h_s(z)|
+|I^{h_s(z)}\phi(X_s)|
\end{equation}
with a constant $C$ independent of $n$, and since 
$\lim_{n\to\infty}|1-\zeta_n(X_s+h_s(z))|=0$, we have 
$$
\limsup_{n\to\infty}|I^{h_s(z)}\phi_n(X_s)-I^{h_s(z)} \phi(X_s)|=0
\quad\text{for every $(\omega,s,z)$}. 
$$ 
Hence by \eqref{Ih}, taking into account conditions \eqref {integrals} and \eqref{condition1} 
on $h$ and  $I^{h_s(z)} \phi(X_s)$, we can apply Lebesgue's theorem on 
dominated convergence  to obtain
$$
\lim_{n\to\infty}\int_0^T\int_Z|I^{h_s(z)}\phi(X_s)-I^{h_s(z)}\phi_n(X_s)|^2\,\mu(dz)\,ds
=0 \quad \rm{(a.s.)},
$$
which implies that for $n\to\infty$ we have 
$$
\int_0^t\int_ZI^{h_s(z)}\phi_n(X_s)\,\tilde{\pi}(dz,ds)
\to \int_0^t\int_ZI^{h_s(z)}\phi(X_s)\,\tilde{\pi}(dz,ds)
$$
in probability uniformly in $t\in[0,T]$ for each $T>0$.
Similarly, we get 
$$
\lim_{n\to\infty}\int_0^T\int_Z|
I^{\bar{h}_s(z)}\phi_n(X_s)-
I^{\bar{h}_s(z)}\phi(X_s)|\,\pi(dz,ds)=0\quad \rm{(a.s.)}
$$
for every $T\geq0$. Using the identity 
$$
J^{a}(\varphi\phi)(x)=\phi(x)J^{a}\varphi(x)
+\varphi(x)J^{a}\phi(x)+I^a\varphi I^a\phi, \quad a,x\in\bR^M
$$
with $\varphi=\phi_n$ and $a=h_s(z)$, we get 
$$
J^{h_s(z)}\phi(X_s)-J^{h_s(z)}\phi_n(X_s)
$$
$$
=(1-\zeta_n(X_s))J^{h_s(z)}\phi(X_s)
+\phi(X_s)J^{h_s(z)}\zeta_n(X_s)+I^{h_s(z)}\phi(X_s)I^{h_s(z)}\zeta_n(X_s).
$$
Hence taking into account $|(1-\zeta_n(X_s))|\leq 1$, 
$$
|J^{h_s(z)}\zeta_n(X_s)|
\leq \int_0^1(1-\theta)|h^i_s(z)h^j_s(z)
D_{ij}\zeta_n(X_s+\theta h_s(z))|\,d\theta
\leq \frac{C}{n}|h_s(z)|^2, 
$$
$$
|I^{h_s(z)}\phi(X_s)I^{h_s(z)}\zeta_n(x)|
\leq\frac{C}{n} |I^{h_s(z)}\phi(X_s)||h_s(z)|
\leq \frac{C}{n}( |I^{h_s(z)}\phi(X_s)|^2+|h_s(z)|^2)  
$$
and $\lim_{n\to\infty}|(1-\zeta_n(X_s))|=0$, we obtain 
$$
|J^{h_s(z)}\phi(X_s)-J^{h_s(z)}\phi_n(X_s)|
$$
\begin{equation}                                                          \label{LJ}
\leq 
|J^{h_s(z)}\phi(X_s)|+ \frac{C}{n}(|\phi(X_s)||h_s(z)|^2+ |I^{h_s(z)}\phi(X_s)|^2+|h_s(z)|^2)
\end{equation}
with a constant $C$ independent of $n$, and 
$$
\lim_{n\to\infty}|J^{h_s(z)}\phi(X_s)-J^{h_s(z)}\phi_n(X_s)|=0\quad \text{for all $(\omega,s,z)$}. 
$$
Thus by  virtue of \eqref{LJ} and conditions 
\eqref{integrands}, \eqref{condition1} and  \eqref{condition2} on $h$, $I^h(X_s)$ and $J^h(X_s)$, 
we can use Lebesgue's theorem on dominated convergence again to get 
$$
\lim_{n\to\infty}\int_0^T\int_Z
|J^{h_s(z)}\phi_n(X_s)-J^{h_s(z)}\phi(X_s)|\,\mu(dz)\,ds\quad \rm{(a.s.)} 
$$
for every $T\geq0$, which completes the proof of Theorem \ref{theorem ItoBL}.
\end{proof}

\begin{remark}
The above theorem is useful if one can check that conditions 
\eqref{condition1}-\eqref{condition2} 
are satisfied. If $D_i\phi$ and $D_{ij}\phi$ are bounded functions 
for every $i,j=1,2,...,M$, then conditions \eqref{condition1}-\eqref{condition2} 
are always satisfied, since for every $t>0$ 
\begin{align}
\int_0^t\int_Z|I^{h_s(z)}\phi(X_s)|^2\,\mu(dz)\,ds
&
=\int_0^t\int_Z\Big|\int_0^1\nabla \phi(X_s+h_s(z)) h_s(z)\,d\theta\Big|^2\,\mu(dz)\,ds\nonumber\\
& \leq C\int_0^t\int_Z|h_s(z)|^2\,\mu(dz)\,ds<\infty \quad (\rm{a.s.}) \nonumber
\end{align}
and
\begin{align*}
\int_0^t\int_Z |J^{h_s(z)}\phi(X_s)| \,\mu(dz)\,ds
&= \int_0^t\int_Z|\int_0^1 (1-\theta)h^i_s(z)h^j_s(z)D_{ij}\phi(X_s
+\theta h_s(z))d\theta|\,\mu(dz)\,ds\\
& \leq C\int_0^t\int_Z|h_s(z)|^2\mu(dz)ds<\infty\quad \rm{(a.s.)} 
\end{align*}
with a constant  $C$. Thus by virtue of the above theorem, under the conditions  
\eqref{integrands} and \eqref{integrals} It\^o formula \eqref{formula standard} 
holds if the first and second order derivatives of $\phi$ are bounded continuous functions. 
As Example \ref{example1} shows Theorem \ref {theorem ItoBL} is 
not applicable to $\phi(x)=|x|^p$ 
for $p\geq2$. 
\end{remark}
Next we formulate an It\^o formula which holds under the natural conditions.  
\eqref{integrands}-\eqref{integrals}

\begin{theorem}                                                             \label{theorem0}
Let conditions \eqref{integrands} and \eqref{integrals} hold, 
and let $\phi$ from $C^2(\bR^M)$. 
Then $\phi(X_t)$ is a semimartingale such that 
$$
\phi(X_t)
= \phi(X_0)+ \int_0^tD_i\phi(X_s)g_s^{ir}\,dw_s^r                        
+\int_0^tD_i\phi(X_s)f^i_s+
\tfrac{1}{2}D_iD_j\phi(X_s)g_s^{ir}g_s^{jr}\,ds
$$                                 
\begin{align}
& +\sum_{k=1}^m\int_0^t\int_{Z_k}\phi(X_{s-}+\bar h^k_s(z))-\phi(X_{s-})\,\pi^k(dz,ds)
+\sum_{k=1}^m\int_0^t\int_{Z_k}D_i\phi(X_{s-})h^{ik}_s(z)\,\tilde\pi^k(dz,ds)                                       \nonumber\\
&
+\sum_{k=1}^m\int_0^t\int_{Z_k}
\phi(X_{s-}+h^k_s(z))-\phi(X_{s-})-D_i\phi(X_{s-})h^{ik}_s(z)\,\pi^k(dz,ds)                \label{Ito01}
\end{align}
almost surely for all $t\geq0$. 
\end{theorem}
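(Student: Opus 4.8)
The plan is to deduce \eqref{Ito01} from the already established formula \eqref{formula standard}, first for integrands $h$ bounded by a constant, and then for general $h$ satisfying only \eqref{integrals} by a truncation argument. For notational convenience I take $m=1$; the case $m>1$ is handled in the same way with the extra index $k$. With the notation $I^a,J^a$ of \eqref{def IJ} and the elementary pointwise identity $I^{a}\phi(v)=a^iD_i\phi(v)+J^{a}\phi(v)$, the two formulas differ only in their jump terms in $h$, and these are linked by the purely algebraic identity
\begin{align*}
\int_0^t\!\!\int_Z D_i\phi(X_{s-})h^i_s(z)\,\tilde\pi(dz,ds)
&+\int_0^t\!\!\int_Z J^{h_s(z)}\phi(X_{s-})\,\pi(dz,ds)\\
&=\int_0^t\!\!\int_Z I^{h_s(z)}\phi(X_{s-})\,\tilde\pi(dz,ds)
+\int_0^t\!\!\int_Z J^{h_s(z)}\phi(X_s)\,\mu(dz)\,ds,
\end{align*}
which follows from $\tilde\pi=\pi-\mu\,ds$ together with the fact that $X_{s-}=X_s$ for Lebesgue-almost every $s$. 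This identity is valid whenever all four integrals are finite, in which case the right-hand side is exactly the pair of jump terms in \eqref{formula standard} and the left-hand side the pair of jump terms in \eqref{Ito01}, so that the two formulas coincide.

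I would first treat bounded $h$, say $|h|\leq K$. Then Theorem \ref{theorem standard} gives \eqref{formula standard}, and all four integrals above are finite: the $\tilde\pi$- and $\mu\,ds$-integrals because $I^{h}\phi$ is $O(|h|)$ and $J^{h}\phi$ is $O(|h|^2)$ along the (a.s.\ bounded on $[0,T]$) c\`adl\`ag path of $X$, together with $\int_0^T\!\int_Z|h|^2\mu(dz)\,ds<\infty$; and the $\pi$-integral of $J^h\phi$ because $\int_0^T\!\int_Z|h|^2\,\pi(dz,ds)<\infty$ almost surely. For this last fact I would use the compensation identity $\mathbb E\int\!\int F\,\pi=\mathbb E\int\!\int F\,\mu\,ds$ for nonnegative predictable $F$, localised by the stopping times $\tau_R=\inf\{t:\int_0^t\!\int_Z|h|^2\mu(dz)\,ds\geq R\}$, which increase to infinity by \eqref{integrals}. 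Hence the displayed identity applies and \eqref{Ito01} holds for bounded $h$ and every $\phi\in C^2(\bR^M)$. Before passing to general $h$, I would check that the $\pi$-remainder term in \eqref{Ito01} is well defined under \eqref{integrals} alone, by splitting the jumps into $\{|h|\leq1\}$ and $\{|h|>1\}$: on the former $J^{h}\phi$ is $O(|h|^2)$ and $\int\!\int\mathbf 1_{|h|\leq1}|h|^2\,\pi<\infty$ a.s.\ by the same compensation argument, while on the latter the number of such jumps in $[0,T]$ has finite compensator $\int\!\int\mathbf 1_{|h|>1}\mu\,ds\leq\int\!\int|h|^2\mu\,ds<\infty$, hence is finite a.s., so $J^h\phi(X_{s-})$ contributes only finitely many finite terms.

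For general $h$ I would truncate, setting $h^{(n)}=(-n)\vee h\wedge n$ and defining $X^{(n)}$ as in the proof of Theorem \ref{theorem ItoBL}, for which $\sup_{t\in[0,T]}|X^{(n)}_t-X_t|\to0$ in probability. Applying the bounded case gives the analogue of \eqref{Ito01} for $(X^{(n)},h^{(n)})$, and it remains to let $n\to\infty$. The drift, Wiener and $\bar h$-$\pi$ terms converge exactly as in Theorem \ref{theorem ItoBL}, and the $\tilde\pi$-linear term converges because
$$
\int_0^T\!\!\int_Z\big|D_i\phi(X^{(n)}_{s-})h^{i(n)}_s(z)-D_i\phi(X_{s-})h^{i}_s(z)\big|^2\,\mu(dz)\,ds\to0
$$
in probability, by dominated convergence (along an a.s.-convergent subsequence of $X^{(n)}$) with dominating function $C(\omega)|h|^2$, the constant being uniform in $n$ since the $X^{(n)}$ are uniformly bounded on $[0,T]$ along the subsequence and $|h^{(n)}|\leq|h|$.

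The main obstacle is the convergence of the $\pi$-remainder term, $\int_0^t\!\int_Z J^{h^{(n)}_s(z)}\phi(X^{(n)}_{s-})\,\pi\to\int_0^t\!\int_Z J^{h_s(z)}\phi(X_{s-})\,\pi$. Here one cannot split into $\tilde\pi$- and $\mu\,ds$-parts and pass to the limit in each, since by Example \ref{example1} those pieces may individually diverge; the limit must be taken against the raw measure $\pi$. I would again split the jumps: on $\{|h|\leq1\}$ one has $h^{(n)}=h$ for all $n\geq1$, and dominated convergence applies with dominating function $C(\omega)|h|^2\mathbf 1_{|h|\leq1}$, using the uniform local bound on $D_{ij}\phi$ along the (uniformly bounded) paths; on $\{|h|>1\}$ it is a finite sum whose terms converge pointwise by continuity of $\phi$ and $D_i\phi$ together with $X^{(n)}_{s-}\to X_{s-}$ and $h^{(n)}\to h$. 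This yields convergence in probability of the $\pi$-remainder. Since $\phi(X^{(n)}_t)\to\phi(X_t)$ and every term on the right of the truncated formula converges in probability to the corresponding term of \eqref{Ito01}, the almost sure identity for $(X^{(n)},h^{(n)})$ passes to the limit, giving \eqref{Ito01} almost surely for each $t$ and, by right-continuity of both sides, simultaneously for all $t$. The delicate point throughout is precisely the handling of the $\pi$-remainder under the weak hypotheses \eqref{integrals}, both for its finiteness and for its convergence, which forces the small-jump/large-jump decomposition rather than the naive separation into compensated and compensator parts.
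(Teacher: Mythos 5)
Your proposal is correct and follows essentially the same route as the paper: rewrite \eqref{formula standard} for bounded $h$ via the decomposition $I^{h}\phi = h^iD_i\phi + J^{h}\phi$, establish $\int_0^T\!\int_Z|h_t(z)|^2\,\pi(dz,dt)<\infty$ a.s.\ by the same localized compensation argument (the paper's \eqref{pi}), then truncate $h^{(n)}=-n\vee h\wedge n$ and pass to the limit term by term. The only difference is cosmetic: to dominate the $J$-remainder against the raw measure $\pi$, the paper derives the $\pi$-essential supremum bounds \eqref{esssup1}--\eqref{esssup2} and uses the single dominating function $2\sup_{|x|\leq\rho+\eta}|D^2\phi(x)|\,|h_t(z)|^2$, whereas you split into the jumps with $|h|\leq1$ (where $h^{(n)}=h$ and the same $C(\omega)|h|^2$ domination applies) and the a.s.\ finitely many jumps with $|h|>1$ (pointwise convergence) --- equivalent devices resting on the same key fact \eqref{pi}.
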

\begin{proof}
We prove Theorem \ref{theorem0} by rewriting 
It\^o formula \eqref{formula standard} 
into equation \eqref{Ito01} 
under the additional condition that $h$ is bounded, and  then 
we dispense with this condition by approximating 
$h$ by bounded functions.  For notational simplicity  
we assume $m=1$, for $m>1$ the proof goes in the same way. 
First in addition to the conditions \eqref{integrands} and 
\eqref{integrals} assume there is a constant $K$ such that 
$|h|\leq K$. By Taylor's formula 
for 
$I^{a}\phi(v)$
and
$J^{a}\phi(v)$, introduced in \eqref{def IJ}, 
for each $v,a\in\bR^M$ we have 
\begin{equation}                                                               \label{Taylor}
|I^a\phi(v)|\leq \sup_{|x|\leq |a|+|v|}|D\phi(x)||a|, 
\quad |J^a\phi(v)|\leq \sup_{|x|\leq |a|+|v|}|D^2\phi(x)||a|^2,  
\end{equation}
where $|D\phi|^2:=\sum_{i=1}^M|D_i\phi|^2$ and 
$|D^2\phi|^2:= \sum_{i=1}^M\sum_{j=1}^M|D_iD_j\phi|^2$. 
Since $(X_t)_{t\geq0}$ is a cadlag process, $R:=\sup_{t\leq T}|X_t|$ 
is a finite random variable for each fixed $T$. Thus we have  
\begin{equation}                                                             \label{Jintegral1}
\int_0^T\int_Z|J^{h_t(z)}\phi(X_{t-})|\mu(dz)\,dt
\leq \sup_{|x|\leq R+K}|D^2\phi(x)
|\int_0^T\int_Z|h_t(z)|^2\,\mu(dz)\,dt<\infty 
\end{equation}
and 
\begin{equation}                                                            \label{Jintegral2}
\int_0^T\int_Z|J^{h_t(z)}\phi(X_{t-})|^2\mu(dz)\,dt
\leq \sup_{|x|\leq R+K}|D^2\phi(x)|^2K^2
\int_0^T\int_Z|h_t(z)|^2\,\mu(dz)\,dt<\infty 
\end{equation}
almost surely.  Clearly, 
$$
\int_0^T\int_Z|D_i\phi(X_{t-})h^i_t(z)|^2\,\mu(dz)\,dt
\leq \sup_{|x|\leq R}|D\phi(x)|^2
\int_0^T\int_Z|h_t(z)|^2\,\mu(dz)\,dt<\infty \,\,(\rm{a.s.}).   
$$
Hence, by virtue of \eqref{Jintegral2} the stochastic It\^o integral 
$$
\int_0^t\int_Z\phi(X_{s-}+h_t(z))-\phi(X_s)\,\tilde\pi(dz,ds)
=\int_0^t\int_ZI^{h_s(z)}\phi(X_{s-})\tilde\pi(dz,ds)
$$
can be decomposed as
$$
\int_0^t\int_ZI^{h_s(z)}\phi(X_{s-})\tilde\pi(dz,ds)
=\int_0^t\int_ZJ^{h_s(z)}\phi(X_{s-})\,\tilde\pi(dz,ds)
+\int_0^t\int_ZD_i\phi(X_{s-})h^i_s(z)\,\tilde\pi(dz,ds), 
$$ 
and by virtue of \eqref{Jintegral1} and \eqref{Jintegral2}, 
$$
\int_0^t\int_ZJ^{h_s(z)}\phi(X_{s-})\,\tilde\pi(dz,ds)
+\int_0^t\int_ZJ^{h_s(z)}\phi(X_{s-})\, \mu(dz)\,ds
=\int_0^t\int_ZJ^{h_s(z)}\phi(X_{s-})\,\pi(dz,ds). 
$$
Hence
$$
\int_0^t\int_ZI^{h_s(z)}\phi(X_{s-})\,\tilde\pi(dz,ds)+
\int_0^t\int_ZJ^{h_s(z)}\phi(X_{s-})\, \mu(dz)\,ds
$$
$$
=\int_0^t\int_ZD_i\phi(X_{s-})h^i_s(z)\,\tilde\pi(dz,ds)
+\int_0^t\int_ZJ^{h_s(z)}\phi(X_{s-})\,\pi(dz,ds), 
$$
which shows that Theorem \ref{theorem0} holds 
under the additional condition that $|h|$ is bounded. 
To prove the theorem in full generality we approximate 
$h$ by $h^{(n)}=(h^{1(n)},...,h^{M(n)})$,  where 
$h_t^{in}=-n\vee h_t^{i}\wedge n$ for integers $n\geq1$, and define 
$$
X^{(n)}_t:=X_0+\int_0^tf_s\,ds+\int_0^tg_s^{r}\,dw_s^r
+\int_0^t\int_Z\bar h_s(z)\,\pi(dz,ds)
+\int_0^t\int_Zh^{(n)}_s(z)\,\tilde{\pi}(dz,ds), \quad t\in[0,T]. 
\quad
$$
Clearly, for all $(\omega,t,z)$ 
\begin{equation*}                                           
|h^{(n)}|\leq \min(|h|, nM)\quad
\text{and \quad $h^{(n)}\rightarrow h$\quad 
as $n\rightarrow \infty$}. 
\end{equation*}
Therefore Theorem \ref{theorem0} for $X^{(n)}$ holds, and 
$$
\lim_{n\to\infty}\int_0^T\int_Z|h^{(n)}_t(z)-h_t(z)|^2\,\mu(dz)\,dt=0\,\,(\rm{a.s.}), 
$$
which implies 
$$
\sup_{t\leq T}|X^{(n)}_t-X_t|\to 0 \quad\text{in probability as $n\to\infty$.}
$$
Thus there is a strictly increasing subsequence 
of positive integers $(n_k)_{k=1}^{\infty}$  such that 
$$
\lim_{k\to\infty}\sup_{t\leq T}|X^{(n_k)}_t-X_t|=0\quad(\rm{a.s.}), 
$$ 
which implies  
$$
\rho:=\sup_{k\geq1}\sup_{t\leq T}|X^{(n_k)}_t|<\infty\quad (\rm{a.s.}). 
$$
Hence it is easy to pass to the limit $k\to\infty$ in $\phi(X_t^{(n_k)})$ 
and in the first two integral terms in the equation for $\phi(X_t^{(n_k)})$ in 
Theorem \ref{theorem0}. To pass to the limit in the other terms in this equation  
notice that since $\pi(dz,dt)$ is a counting measure of a point process, 
from the condition 
for $\bar h$  in \eqref{integrals} we get 
\begin{equation}                                                                    \label{esssup1}
\xi:={\esssup}\,|\bar h|<\infty\,\,(\rm{a.s.}),  
\end{equation}
where ${\esssup}$ denotes the essential supremum operator 
with respect to the measure $\pi(dz,dt)$ over $Z\times[0,T]$. 
Similarly, from the condition for $h$ we have
\begin{equation}                                                                     \label{esssup2}
\eta:={\esssup}\,|h|<\infty\,\,(\rm{a.s.}).   
\end{equation}
This can be seen by noting that 
for the sequence 
of predictable stopping times 
$$
\tau_j=\inf\left\{t\in[0,T]:\int_0^t\int_Z|h_s(z)|^2\,\mu(dz)\,ds\geq j \right\}, 
\quad\text{$j=1,2,...$}, 
$$
we have 
$$
E\int_0^T\int_Z{\bf1}_{t\leq\tau_j}|h_t(z)|^2\,\pi(dz,dt)
=E\int_0^T\int_Z{\bf1}_{t\leq\tau_j}|h_t(z)|^2\,\mu(dz)\,dt\leq j<\infty, 
$$
which gives 
$$
\int_0^T\int_Z|h_t(z)|^2\,\pi(dz,dt)<\infty 
\quad
\text{almost surely on 
$\Omega_j=\{\omega\in\Omega:\tau_j\geq T\}$ for each $j\geq1$.} 
$$
Since $(\tau_j)_{j=1}^{\infty}$ is an increasing sequence 
converging to infinity, we have $P(\cup_{j=1}^{\infty}\Omega_j)=1$, i.e., 
\begin{equation}                                                                      \label{pi}
\int_0^T\int_Zh^2_t(z)\,\pi(dz,dt)<\infty\,\,(\rm{a.s.}),    
\end{equation}
which implies \eqref{esssup2}. 
 By \eqref{esssup1} and 
the first inequality 
in \eqref{Taylor}, we have  
$$
|I^{\bar h_t(z)}\phi(X_{t-}^{(n_k)})|+|I^{\bar h_t(z)}\phi(X_{t-})|
\leq 2\sup_{|x|\leq \rho+\xi}|D\phi(x)||\bar h_t(z)|<\infty
$$
almost surely for $\pi(dz,dt)$-almost every $(z,t)\in Z\times[0,T]$. 
Hence by Lebesgue's theorem on dominated convergence we get 
$$
\lim_{k\to\infty}\int_0^T\int_Z
|I^{\bar h_s(z)}\phi(X^{(n_k)}_{s-})-I^{\bar h_s(z)}\phi(X_{s-})|\,\pi(dz,ds)=0 \quad \rm{(a.s.)},  
$$
which implies that for $k\to\infty$ 
$$
\int_0^t\int_Z
I^{\bar h_s(z)}\phi(X^{(n_k)}_{s-})\,\pi(dz,ds)
\to
\int_0^t\int_Z I^{\bar h_s(z)}\phi(X_{s-})\,\pi(dz,ds)
$$
almost surely, uniformly in $t\in[0,T]$. Clearly, 
$$
|D_i\phi(X^{(n_k)}_{t-})h^{i(n_k)}_t(z)|^2+|D_i\phi(X_{t-})h^{i}_t(z)|^2
\leq 2\sup_{|x|\leq \rho}|D\phi(x)|^2|h_t(z)|^2
$$
almost surely for all $(z,t)\in Z\times[0,T]$. Hence by Lebesgue's theorem 
on dominated convergence 
$$
\lim_{k\to\infty}\int_0^T\int_Z
|D_i\phi(X^{(n_k)}_{t-})h^{i(n_k)}_t(z)-D_i\phi(X_{t-})h^{i}_t(z)|^2
\,\mu(dz)\,dt=0\quad \rm{(a.s.)},  
$$
which implies that for $k\to\infty$ 
$$
\int_0^t\int_Z
D_i\phi(X^{(n_k)}_{s-})h^{i(n_k)}_s(z)\,\tilde\pi(dz,ds)
\to
\int_0^t\int_Z
D_i\phi(X_{s-})h^{i}_s(z)\,\tilde\pi(dz,ds)   
$$
in probability, uniformly in $t\in[0,T]$. 
Finally note that by using the second inequality in \eqref{Taylor} 
together with \eqref{esssup2} we have  
$$
|J^{h^{(n_k)}_t(z)}\phi(X_{t-}^{(n_k)})|+|J^{h_t(z)}\phi(X_{t-})|
\leq 2\sup_{|x|\leq \rho+\eta}|D^2\phi(x)||h_t(z)|^2
$$
almost surely for $\pi(dz,dt)$-almost every $(z,t)\in Z\times[0,T]$. Hence, 
taking into account \eqref{pi}, by Lebesgue's theorem 
on dominated convergence we obtain  
$$
\lim_{k\to\infty}\int_0^T\int_Z
|J^{h^{(n_k)}_t(z)}\phi(X^{(n_k)}_{t-})-J^{h_t(z)}\phi(X_{t-})|
\,\pi(dz,dt)=0\quad \rm{(a.s.)},  
$$
which implies that for $k\to\infty$ 
$$
\int_0^t\int_ZJ^{h^{(n_k)}_s(z)}\phi(X^{(n_k)}_{s-})\,\pi(dz,ds)
\to
\int_0^t\int_ZJ^{h_s(z)}\phi(X_{s-})\,\pi(dz,ds)
$$
almost surely, uniformly in $t\in[0,T]$ for every $T>0$, that 
finishes the proof of the theorem.  
\end{proof}
\begin{remark}
One can give a different proof of Theorem \ref{theorem0} 
by showing that for finite measures $\mu^k$, 
the It\^o formula 
for general semimartingales, Theorem VIII.27 in \cite{DM1982}, 
 applied to $(X_t)_{t\geq0}$, can be rewritten as 
equation \eqref{Ito01}. Hence by an approximation procedure 
one can get the general case of $\sigma$-finite measures $\mu^k$. 
\end{remark}

\begin{corollary}                                                        \label{corollary Ito}
Let conditions \eqref{integrands} and \eqref{integrals} hold. Then for 
any $p\geq2$ the process $|X_t|^p$ is a semimartingale such that  
\begin{align}
|X_t|^p
= &|\psi|^p+ p\int_0^t|X_s|^{p-2}X^i_sg_s^{ir}\,dw_s^r                           \nonumber\\
& + \tfrac{p}{2}\int_0^t\left(2|X_s|^{p-2}X^i_sf^i_s+
(p-2)|X_s|^{p-4}|X^i_sg^{i\cdot}_s|_{l_2}^2
+\sum_{i=1}^M|X_s|^{p-2}|g^{i\cdot}_s|_{l_2}^2\right)\,ds                                                   \nonumber\\
& + \sum_{k=1}^mp\int_0^t\int_{Z_k}|X_{s-}|^{p-2}X^i_{s-}h^{ik}_s(z)\,\tilde\pi^k(dz,ds)
\nonumber\\
&
+\sum_{k=1}^m\int_0^t\int_{Z_k}(|X_{s-}+\bar{h}^k_s|^p-|X_{s-}|^p)\,\pi^k(dz,ds)                \nonumber\\
&+\sum_{k=1}^m\int_0^t\int_{Z_k}\left(
|X_{s-}+h^k_s|^p-|X_{s-}|^p-p|X_{s-}|^{p-2}X^i_{s-}h^{ik}_s\right)\,\pi^k(dz,ds)                \label{Itop}
\end{align}
almost surely for all $t\geq0$, where, and through the paper, the convention $0/0:=0$ is used 
whenever it occurs.  
\end{corollary}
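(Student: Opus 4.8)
The plan is to apply Theorem~\ref{theorem0} to the function $\phi(x)=|x|^p$, $x\in\bR^M$, so the substance of the argument reduces to verifying regularity and then reading off the derivatives. The first step is to check that $\phi\in C^2(\bR^M)$ for every $p\geq2$, which is what makes Theorem~\ref{theorem0} applicable under the standing conditions \eqref{integrands}--\eqref{integrals}. A direct computation gives the first-order derivatives $D_i\phi(x)=p|x|^{p-2}x^i$, which are continuous on all of $\bR^M$ and vanish at the origin since $p\geq2$, together with the second-order derivatives
$$
D_iD_j\phi(x)=p|x|^{p-2}\delta_{ij}+p(p-2)|x|^{p-4}x^ix^j.
$$
For $p=2$ the second summand vanishes identically, while for $p>2$ both summands are $O(|x|^{p-2})$ as $x\to0$; hence $D_iD_j\phi$ extends continuously to the origin and $\phi\in C^2(\bR^M)$.

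The second step is to substitute these derivatives into formula~\eqref{Ito01}. The Wiener integrand $D_i\phi(X_s)g^{ir}_s$ becomes $p|X_s|^{p-2}X^i_sg^{ir}_s$, reproducing the first line of \eqref{Itop}. In the drift, the first-order term $D_i\phi(X_s)f^i_s$ gives $p|X_s|^{p-2}X^i_sf^i_s$, and contracting the Hessian against $g^{ir}_sg^{jr}_s$ yields
$$
\tfrac12 D_iD_j\phi(X_s)g^{ir}_sg^{jr}_s
=\tfrac{p}{2}|X_s|^{p-2}\sum_{i=1}^M|g^{i\cdot}_s|^2_{l_2}
+\tfrac{p(p-2)}{2}|X_s|^{p-4}|X^i_sg^{i\cdot}_s|^2_{l_2},
$$
where the Kronecker delta produces the third drift term of \eqref{Itop} and the identity $\sum_r(X^i_sg^{ir}_s)^2=|X^i_sg^{i\cdot}_s|^2_{l_2}$ produces the second. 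The three jump integrals in \eqref{Ito01} pass over verbatim to the last three lines of \eqref{Itop} after inserting $D_i\phi(X_{s-})h^{ik}_s=p|X_{s-}|^{p-2}X^i_{s-}h^{ik}_s$, and the initial term $\phi(X_0)$ becomes $|X_0|^p$.

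The only point requiring care, and the one I would single out as the place where a reader might worry, is the interpretation of the coefficients $|X_s|^{p-2}$ and $|X_s|^{p-4}$ on the zero set of $X_s$; this is exactly where the convention $0/0:=0$ enters. Since $\phi$ is genuinely of class $C^2$, its derivatives are well defined and continuous everywhere, so no actual singularity occurs: the factor $|X_s|^{p-4}$ appears only multiplied by $X^i_sX^j_s$, equivalently by $|X^i_sg^{i\cdot}_s|^2_{l_2}$, and this product vanishes wherever $X_s=0$. Thus every integrand in \eqref{Itop} is well defined under the stated convention, and the corollary follows at once from Theorem~\ref{theorem0}. I expect no genuine obstacle beyond this bookkeeping, as the entire weight of the result is carried by the $C^2$ regularity of $|x|^p$ for $p\geq2$.
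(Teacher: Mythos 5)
Your proposal is correct and follows essentially the same route as the paper: the paper's proof likewise consists of checking $\phi(x)=|x|^p\in C^2(\bR^M)$ for $p\geq2$, recording the derivatives $D_i|x|^p=p|x|^{p-2}x^i$ and $D_jD_i|x|^p=p(p-2)|x|^{p-4}x^ix^j+p|x|^{p-2}\delta_{ij}$, and substituting into Theorem~\ref{theorem0}. Your additional verification of continuity of the Hessian at the origin and your remark on the $0/0$ convention only flesh out details the paper leaves implicit.
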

\begin{proof}
Notice that $\phi(x)=|x|^p$ for $p\geq2$ belongs to $C^2(\bR^M)$  
with 
$$
D_i|x|^p=p|x|^{p-2}x^i, \quad D_jD_i|x|^p=p(p-2)|x|^{p-4}x^ix^j+p|x|^{p-2}\delta_{ij},  
$$
where $\delta_{ij}=1$ for $i=j$ and $\delta_{ij}=0$ for $i\neq j$. 
Hence it is easy to see that Theorem \ref{theorem0} applied to $\phi(x)=|x|^p$ gives the corollary. 
\end{proof} 
 
The above corollary will be used to obtain an It\^o's
formulas for jump processes in $L_p$-spaces presented in the next section.

\mysection{It\^o formula in $L_p$ spaces}                         \label{section L_p Ito}
 
It\^o formulas in infinite dimensional spaces play important roles 
in studying stochastic PDEs. Our theorem below is motivated by applications 
in the theory of stochastic integro-differential equations arising in nonlinear filtering 
theory of jump diffusions. To present it first we need to introduce some notations, where 
$T$ is a fixed positive number, and $d\geq1$ and $M\geq1$ are fixed integers.  

The Borel $\sigma$-algebra of a topological space $V$ is denoted by $\cB(V)$. 
For $p, q\geq 1$ we denote by $L_p=L_p(\bR^d,\bR^M)$  
and $\cL_{q}=\cL_{q}(Z, \bR^M)$ 
the Banach spaces of $\bR^M$-valued Borel-measurable functions of $f=(f^i(x))_{i=1}^M$ 
and $\cZ$-measurable functions $h=(h^{i}(z))_{i=1}^M$ of $x\in\bR^d$ and $z\in Z$, respectively 
such that 
$$
|f|_{L_p}^p=\int_{\bR^d}|f(x)|^p\,dx<\infty
\quad
\text{and}
\quad 
|h|^{q}_{\cL_{q}}=\int_{Z}|h(z)|^{q}\,\mu(dz)<\infty. 
$$
The notation 
$\cL_{p,q}$ means the space $\cL_{p}\cap\cL_{q}$ with the norm 
$$
|v|_{\cL_{p,q}}=\max(|v|_{\cL_{p}},|v|_{\cL_{q}}) \quad\text{for $v\in \cL_{p}\cap\cL_{q}$}.
$$
As usual, $W^1_p$ denotes the space of functions $u\in L_p$ such that 
$D_iu\in L_p$ for every $i=1,2,...,d$, where $D_iv$ means the generalised derivative 
of $v$ in $x^i$ for locally integrable functions $v$ on $\bR^d$. The norm of $u\in W^1_p$ 
is defined by 
$$
|u|_{W^1_p}=|u|_{L_p}+\sum_{i=1}^d|D_iu|_{L_p}. 
$$
We use the notation $L_p=L_p(\ell_2)$ for $L_p(\bR^d,\ell_2)$, 
the space of Borel-measurable 
functions $g=(g^{ir})$ on $\bR^d$ with values in $\ell_2$ such that 
$$
|g|_{L_p}^p=\int_{\bR^d}|g(x)|_{\ell_2}^p\,dx<\infty.  
$$
For $p,q\in [0,\infty)$ we denote by $L_{p}=L_p(\cL_{p,q})$ and $L_p=L_p(\cL_q)$ 
the Banach spaces of Borel-measurable functions 
$h=(h^i(x,z))$ and $\tilde{h}=(\tilde{h}^i(x,z))$ of $x\in\bR^d$ 
with values in $\cL_{p,q}$ and $\cL_q$,  
respectively,  
such that 
$$
|h|^p_{L_{p}}=\int_{\bR^d}|h(x,\cdot)|^p_{\cL_{p,q}}\,dx<\infty
\quad\text{and}\quad  
|\tilde{h}|^p_{L_{p}}=\int_{\bR^d}|\tilde{h}(x,\cdot)|^p_{\cL_q}\,dx<\infty. 
$$
For $p\geq2$ and a separable real Banach 
space $V$ we denote by $\bL_p=\bL_p(V)$ 
the space of predictable $V$-valued functions $f=(f_t)$ of 
$(\omega,t)\in\Omega\times[0,T]$ such that 
$$
|f|_{\bL_p}^p=E\int_0^T|f_t|^p_{V}\,dt<\infty. 
$$
In the sequel $V$ will be $L_p(\bR^d,\bR^M)$, $L_p(\bR^d,\ell_2)$, or 
$L_p(\bR^d,\cL_{p,2})$. When $V=L_p(\bR^d,\cL_{p,2})$ then for $\bL_p(V)$  
the notation $\bL_{p,2}$ is also used.  For $\varepsilon\in(0,1)$ 
and locally integrable functions $v$ of $x\in\bR^d$ we use the notation  
$v^{(\varepsilon)}$ for the mollifications of $v$, 
\begin{equation}                                          \label{mollification def}
v^{(\varepsilon)}(x)=\int_{\bR^d}v(x-y)k_{\varepsilon}(y)\,dy, \quad x\in\bR^d, 
\end{equation}
where $k_{\varepsilon}(y)=\varepsilon^{-d}k(y/\varepsilon)$ for $y\in\bR^d$  
with a fixed function $k\in C_0^{\infty}$ of unit integral. 
If $v$ is a locally Bochner-integrable function on $\bR^d$,  
taking values in a Banach space, then the mollification of $v$ is defined 
as \eqref{mollification def} in the sense of Bochner integral.

Recall that the summation convention with respect 
to integer valued indices is used throughout 
the paper.

\begin{assumption}                                                          \label{assumption1}
Let $\psi^i$ be an $L_p(\bR^d,\bR)$-valued $\cF_0$-measurable random variable,  
$(u^{i}_{t})_{t\in0,T}$ be a progressively measurable $L_p$-valued 
process and let $f^{i\alpha}$, 
$g^i=(g^{ir})_{r=1}^{\infty}$ and $h^i$ be predictable functions on 
$\Omega\times[0,T]\times Z$ with values in $L_p(\bR^d,\bR)$, $L_p(\bR^d,l_2)$ 
and $L_p(\bR^d,\cL_{p,2})$, respectively, for each $i=1,2,...,M$ and $\alpha=0,1,...,d$,  
such that the following conditions are satisfied for each $i=1,2,...,M$. 
\begin{enumerate}
\item[(i)] We have $u^i_t\in W^1_p$ for $P\otimes dt$-a.e. 
$(\omega,t)\in\Omega\times[0,T]$ such that 
\begin{equation}                                                                      \label{1}
\int_0^T|u_t^i|^p_{W^1_p}\,dt<\infty\quad\rm{(a.s.)}.
\end{equation}
\item[(ii)] Almost surely 
\begin{equation}                                                                      \label{2}
\cK^p_{p}(T):=\sum_{i=1}^M\int_0^T\int_{\bR^d}\sum_{\alpha}|f^{i\alpha}_t(x)|^p
+|g^i_t(x)|_{l_2}^p+|h^i_t(x)|^p_{\cL_{p,2}}\,dx\,dt<\infty
\end{equation}
\item[(iii)] 
For every $\varphi\in C_0^{\infty}(\bR^d)$ we have 
\begin{equation}                                                                \label{eq1}
(u^i_t,\varphi)
=(\psi,\varphi)
+\int_0^t(f^{i\alpha}_s,D^{\ast}_{\alpha}\varphi)\,ds
+\int_0^t(g_s^{ir},\varphi)\,dw_s^r
+\int_0^t\int_Z(h^i_s(z),\varphi)\,\tilde{\pi}(dz,ds)
\end{equation}
for $P\otimes dt$-almost every $(\omega,t)\in\Omega\times[0,T]$.
\end{enumerate}
\end{assumption}
In equation \eqref{eq1}, and later on, we use the notation $(v,\phi)$ 
for the Lebesgue integral over $\bR^d$ of the product $v\phi$ 
for functions  $v$ and $\phi$ on $\bR^d$ when their product 
and its integral are well-defined. Below $u$ stands for $(u^1,...,u^M)$. 

\begin{theorem}                                                          \label{theorem1}
Let Assumption \ref{assumption1} hold with $p\geq2$. Then there is 
an $L_p(\bR^d,\bR^M)$-valued adapted cadlag process
 $\bar u=(\bar u^i_t)_{t\in[0,T]}$ 
such that equation \eqref{eq1}, with $\bar u$ in place of $u$, holds 
for each $\varphi\in C_0^{\infty}(\bR^d)$  almost surely for all $t\in[0,T]$. 
Moreover,  $u=\bar u$ for $P\otimes dt$-almost every 
$(\omega,t)\in\Omega\times[0,T]$, and almost surely 
\begin{align}
|\bar u_t|^p_{L_p}
&= |\psi|_{L_p}^p
+p\int_0^t\int_{\mathbb{R}^d}|\bar u_s|^{p-2}\bar u^i_sg^{ir}_s\,dx\, dw^r_s                    \nonumber\\
& 
+\frac{p}{2}\int_0^t\int_{\mathbb{R}^d} 2|u_s|^{p-2}\bar u^i_sf^{i0}_s
-2|u_s|^{p-2}D_k u^i_sf_s^{ik}-(p-2)|u_s|^{p-4}u^i_sf^{ik}_sD_k|u_s|^2\,dx\,ds     \nonumber\\
&
+\frac{p}{2}\int_0^t\int_{\mathbb{R}^d}
(p-2)|u_s|^{p-4}
 |u^i_sg_s^{ i\cdot}|_{l_2}^2
+|u_s|^{p-2}
\sum_{i=1}^M|g^{i\cdot}_s|^2_{l_2}\,dx\,ds 
                                                 \nonumber\\
& 
+p\int_0^t\int_Z\int_{\mathbb{R}^d}|u_{s-}|^{p-2}u^i_{s-}h^i_s
\,dx\,\tilde{\pi}(dz,ds)                                                                                              \nonumber\\
&
+\int_0^t\int_Z\int_{\mathbb{R}^d}
(|\bar u_{s-}+h_s|^p-|\bar u_{s-}|^p-p|\bar u_{s-}|^{p-2}\bar u^i_{s-}h^i_s)
\,dx\,\pi(dz,ds)                                                                                                         \label{Ito1}
\end{align}
for all $t\in[0,T]$, where   $\bar u_{s-}$ means the left-hand limit 
in $L_p$  of $\bar u$ at $s$. If $f^i=0$ for $i=1,2,...,d$ then the above statements 
hold if Assumption \ref{assumption1} is satisfied with (i) replaced in it 
with the weaker condition that 
\begin{equation}                                                                    \label{L}
\int_0^T|u^i_t|^p_{L_p}\,dt<\infty\quad\rm{(a.s.)}.
\end{equation}
\end{theorem}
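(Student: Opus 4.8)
The plan is to follow the strategy announced in the introduction: reduce the infinite-dimensional statement to the finite-dimensional Corollary \ref{corollary Ito} by mollifying in the space variable and then integrating over $\bR^d$. Fix $\varepsilon\in(0,1)$ and $x\in\bR^d$, and apply the weak equation \eqref{eq1} with the test function $\varphi=k_\varepsilon(x-\cdot)$. Since $D^{\ast}_{\alpha}$ acts against the mollifier by integration by parts, this exhibits the mollification $v^i_t(x):=u^{i(\varepsilon)}_t(x)$ as the solution, for each fixed $x$, of a finite-dimensional It\^o--L\'evy equation of the form \eqref{1.9.2} with $\bar h\equiv0$, drift $f^{i0(\varepsilon)}_s(x)+\sum_{k=1}^d D_k f^{ik(\varepsilon)}_s(x)$, diffusion coefficient $g^{ir(\varepsilon)}_s(x)$ and jump coefficient $h^{i(\varepsilon)}_s(z,x)$. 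Conditions \eqref{integrands}--\eqref{integrals} hold pointwise in $x$ thanks to \eqref{2}, so Corollary \ref{corollary Ito} applies and produces the finite-dimensional It\^o formula for $|v_t(x)|^p$ at every $x$.

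The next step is to integrate this identity over $x\in\bR^d$ and to interchange the spatial integral with the $ds$, $dw^r_s$, and $\tilde\pi(dz,ds)$/$\pi(dz,ds)$ integrations by a stochastic Fubini theorem, the needed integrability being supplied by \eqref{2} together with boundedness of mollification in $L_p$. In the resulting drift I would integrate by parts in $x_k$ in the term $p\int_{\bR^d}|v_s|^{p-2}v^i_s D_k f^{ik(\varepsilon)}_s\,dx$; using $D_k(|v_s|^{p-2}v^i_s)=|v_s|^{p-2}D_k v^i_s+\tfrac{p-2}{2}|v_s|^{p-4}v^i_s D_k|v_s|^2$ this produces precisely the two $f^{ik}$-terms occurring in \eqref{Ito1}, now with $u^{(\varepsilon)}$ and $f^{(\varepsilon)}$ in place of $\bar u$ and $f$. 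This gives the mollified version of \eqref{Ito1}.

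Before letting $\varepsilon\to0$ I must construct the cadlag $L_p$-valued process $\bar u$. For this I would apply the same construction to the difference $u^{(\varepsilon)}-u^{(\delta)}$ (which satisfies a weak equation with data $\psi^{(\varepsilon)}-\psi^{(\delta)}$, $f^{(\varepsilon)}-f^{(\delta)}$, and so on), combine the resulting $L_p$ formula with the Burkholder--Davis--Gundy inequality after localising by stopping times $\tau_N$ that make the quantities in \eqref{1}--\eqref{2} bounded, and thereby estimate $E\sup_{t\le \tau_N}|u^{(\varepsilon)}_t-u^{(\delta)}_t|^p_{L_p}$ by the $\bL_p$-norms of the data differences, which tend to $0$ as $\varepsilon,\delta\to0$. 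This shows that $\{u^{(\varepsilon)}\}$ is Cauchy in the space of cadlag $L_p$-valued processes on $[0,\tau_N]$; patching over $N$ yields a cadlag limit $\bar u$, and identifying the weak limit gives $u=\bar u$ for $P\otimes dt$-a.e. $(\omega,t)$ and \eqref{eq1} for $\bar u$ almost surely for all $t$.

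Finally I would pass to the limit $\varepsilon\to0$ in the mollified formula term by term. The linear and quadratic terms converge via $u^{(\varepsilon)}\to u$ and $(D_k u)^{(\varepsilon)}\to D_k u$ in $L_p$ (the latter using condition (i)), the cadlag convergence of $\bar u^{(\varepsilon)}$, and dominated convergence. The hard part will be the two singular terms: the drift term $|u|^{p-4}u^i f^{ik}D_k|u|^2$ and the jump integrals containing $|u_{s-}+h|^p-|u_{s-}|^p-p|u_{s-}|^{p-2}u^i_{s-}h^i$. Here $|u|^{p-4}$ is singular on $\{u=0\}$ when $p<4$, so one must argue carefully with the convention $0/0:=0$, establishing a.e. convergence of the mollified integrands off the zero set and a uniform bound through elementary inequalities such as $|u|^{p-4}|u^i||u^j|\le|u|^{p-2}$ and $\big||a+b|^p-|a|^p-p|a|^{p-2}a^ib^i\big|\le C(|a|^{p-2}+|b|^{p-2})|b|^2$, after which Lebesgue's dominated convergence theorem applies on each localised interval. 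This is the technical heart of the argument and is exactly where Krylov's estimates from \cite{K2010} are adapted. The assertion under the weaker hypothesis \eqref{L} follows from the same scheme: when $f^i=0$ for $i=1,\dots,d$ no integration by parts in $x$ is performed, so no derivative of $u$ enters the formula and the membership $u^i_t\in L_p$ alone suffices to run all the above limits.
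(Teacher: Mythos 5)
Your outline is the strategy the paper itself follows, but as written its first step has a genuine gap. Assumption \ref{assumption1}(iii) asserts \eqref{eq1} only for $P\otimes dt$-almost every $(\omega,t)$, so substituting $\varphi=k_\varepsilon(x-\cdot)$ does \emph{not} exhibit $u^{(\varepsilon)}_t(x)$ itself as a semimartingale of the form \eqref{1.9.2} for fixed $x$: to apply Corollary \ref{corollary Ito} you need the equation almost surely for \emph{all} $t$, i.e.\ you must first replace the three integrals on the right-hand side by versions that are cadlag in $t$, adapted, and jointly measurable in $(\omega,t,x)$. The joint measurability is the serious point: the stochastic integrals $\int_0^t(g^{ir}_s,k_\varepsilon(x-\cdot))\,dw^r_s$ and $\int_0^t\int_Z(h^i_s,k_\varepsilon(x-\cdot))\,\tilde\pi(dz,ds)$ are each defined only up to a null set depending on $x$, so without a measurable field of versions the subsequent integration of the pointwise It\^o formula over $x\in\bR^d$, and the interchange of integrals, are not even well posed. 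This is precisely what the class $\cU_p$ and Lemmas \ref{lemma f}, \ref{lemma g} and \ref{lemma h} provide, together with the bounds $E\int_{\bR^d}\sup_{t\leq T}|\cdot|^p\,dx<\infty$ that later feed the limit arguments; the Poisson case, Lemma \ref{lemma h}, is a nontrivial construction proved in \cite{GW Ito}, not an off-the-shelf stochastic Fubini theorem. Your phrase ``interchange \dots by a stochastic Fubini theorem, the needed integrability being supplied by \eqref{2}'' buries exactly the step the paper regards as the main technical difficulty.

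Apart from this, your scheme coincides with the paper's, modulo ordering. The paper first proves the case $f^{ik}=0$, $k=1,\dots,d$, where the cadlag process $\bar u=\psi+a_t+b_t+m_t$ comes directly from the three lemmas and no Cauchy argument is needed; the general case is then obtained by applying this special case to the mollified equation \eqref{differential} with drift $f^{i0(\varepsilon)}+\sum_k D_kf^{ik(\varepsilon)}$, integrating by parts exactly as you propose, and extracting the cadlag limit $\bar u$ from the estimate \eqref{Ito Lp estimate smooth} applied to differences $\bar u^{\varepsilon}-\bar u^{\varepsilon'}$ --- the analogue of your $u^{(\varepsilon)}-u^{(\delta)}$ argument, with Davis', Minkowski and H\"older inequalities in place of your BDG appeal. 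Your localization by stopping times, the integration-by-parts identity producing the two $f^{ik}$-terms, the domination bound $\bigl||a+b|^p-|a|^p-p|a|^{p-2}a^ib^i\bigr|\leq N(|a|^{p-2}+|b|^{p-2})|b|^2$ for the jump terms, and your remark that under $f^{i}=0$ the weaker condition \eqref{L} suffices, all agree with the paper. The repair is therefore local: insert the construction of $\cU_p$-versions (Lemmas \ref{lemma f}--\ref{lemma h}) before invoking Corollary \ref{corollary Ito} pointwise in $x$, and prove the divergence-free case first so that the general case can legitimately cite it for the mollified data.
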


Notice that for $M=1$ equation \eqref{Ito1} has the simpler form 
$$ 
|\bar u_t|^p_{L_p}= 
 |\psi|_{L_p}^p+p\int_0^t\int_{\mathbb{R}^d}|u_s|^{p-2}u_sg_s^r\,dx\,dw^r_s
 $$
 $$
+\frac{p}{2}\int_0^t\int_{\mathbb{R}^d}
\big(2|u_s|^{p-2}u_sf^0_s-2(p-1)|u_s|^{p-2}f^i_sD_iu_s
+(p-1)|u_s|^{p-2}|g_s|^2_{l_2}   \big)\,dx\,ds
$$
$$
+p\int_0^t\int_Z\int_{\mathbb{R}^d}
|\bar u_{s-}|^{p-2}\bar u_{s-}h_s\,dx\,\tilde{\pi}(dz,ds)
$$
\begin{equation}                                                                          \label{Ito2}
+\int_0^t\int_Z\int_{\mathbb{R}^d}\big( | \bar u_{s-}
+h_s|^p-| \bar u_{s-}|^p-p| \bar u_{s-}|^{p-2}\bar u_{s-}h_s  \big)\,dx\,\pi(dz,ds).    
\end{equation}
\medskip

Theorem \ref{theorem1} generalises Theorem 2.1 from \cite{K2010}, 
and we use ideas and methods from 
\cite{K2010} to prove it.  The basic idea in \cite{K2010} adapted to our situation 
can be explained as follows. Assume first that $f^{i\alpha}=0$ for 
$\alpha=1,2,...,d$, and suppose from \eqref{eq1} we could show the existence of a random field 
$\bar u=\bar u(t,x)$ and 
suitable modifications of the integrals of $f^{i}:=f^{i0}_s(x)$, $g=g^{ir}_s(x)$ 
and $h^i_s(x,z)$ against $ds$, $dw^r_s$ 
and $\tilde\pi(dz,ds)$, respectively, satisfying appropriate measurability 
conditions such that the equation 
\begin{equation}                                      \label{eqx}
\bar u^i_t(x)
=\psi^{i}(x)
+\int_0^tf^{i}_s(x)\,ds
+\int_0^tg_s^{ir}(x)\,dw_s^r
+\int_0^t\int_Zh^i_s(x,z)\,\tilde{\pi}(dz,ds)
\end{equation}
holds for every $x\in\bR^d$ and $i=1,2,...,M$. 
Then applying It\^o's formula \eqref{Itop} from Corollary 
\ref{corollary Ito} to 
$|\bar u_t(x)|^p=(\sum_{i}|\bar u^i_t(x)|^2)^{p/2}$ for every $x\in\bR^d$, then 
integrating over $\bR^d$, and finally using suitable stochastic Fubini theorems, we could  
obtain \eqref{Ito1} when $f^{i\alpha}=0$ for $\alpha\geq1$. When 
$f^{i \alpha}\neq0$ we could  
take 
$$
u^{i(\varepsilon)},\quad\psi^{i(\varepsilon)},\quad 
f^{i(\varepsilon)}:=f^{i0(\varepsilon)}+\sum_{k=1}^dD_kf^{ik(\varepsilon)},
\quad g^{ir(\varepsilon)}
\quad \text{and\quad$h^{i(\varepsilon)}$}
$$
instead of $u^i$, $\psi^i$, $f^{i}$, $g^{ir}$ and $h^{i}$ above, respectively to apply 
the theorem in the special case, and let 
$\varepsilon\to0$ in the corresponding It\^o formula after integrating by parts in the terms containing 
$D_kf^{ik(\varepsilon)}$ for $k=1$,....,$d$. 
Notice that we can formally obtain 
equation \eqref{eqx} from 
\eqref{eq1} with $f^{i1}=...=f^{id}=0$ and a suitable process $\bar u$ in place of $u$, 
by substituting $\delta_x$, the Dirac delta at $x$, 
in place of $\varphi$. Clearly, we cannot substitute $\delta_x$, but we can substitute 
approximations $k_{\varepsilon}(x-\cdot)$ of it 
to get 
\begin{equation}                                                   \label{eqex}
\bar u^{i(\varepsilon)}_t(x)
=\psi^{i(\varepsilon)}(x)
+\int_0^tf^{i(\varepsilon)}_s(x)\,ds
+\int_0^tg_s^{ir(\varepsilon)}(x)\,dw_s^r
+\int_0^t\int_Zh^{i(\varepsilon)}_s(x,z)\,\tilde{\pi}(dz,ds)
\end{equation}
in place of \eqref{eqx}. Therefore the above 
strategy is modified as follows.  One chooses suitable representative 
of the stochastic integrals in \eqref{eqex} so that one could apply It\^o's formula 
\eqref{Itop} to $|\bar u^{(\varepsilon)}_t(x)|^p$ for each $x\in\bR^d$,  
integrate the obtained formula over $\bR^d$, then interchange the order of the integrals,  
and finally let $\varepsilon\to0$ to prove equation \eqref{Ito1} when $f^{ik}=0$ 
for $i=1,2,...,M$ and $k=1,2,...,d$. 

To implement the above idea we fix a $p\geq2$ and introduce a class of functions $\cU_p$,   
the counterpart of the class  $\cU_p$  given in \cite{K2010}.  
Let $\mathcal{U}_p$ denote the set of $\bR^M$-valued functions 
$u=u_t(x)=u_t(\omega,x)$ on $\Omega\times [0,T]\times \mathbb{R}^d$ such that
\begin{enumerate}[(i)]
\item $u$ is $\mathcal{F}\otimes\mathcal{B}([0,T])\otimes\mathcal{B}(\mathbb{R}^d)$-measurable,
\item for each $x\in\mathbb{R}^d$, $u_t(x)$ is $\mathcal{F}_t$-adapted,
\item $u_t(x)$ is cadlag in $t\in[0,T]$ for each $(\omega,x)$,
\item $u_t(\omega,\cdot)$ as a function of 
$(\omega,t)$ is $L_p$-valued, $\mathcal{F}_t$-adapted and cadlag in $t$ 
for every $\omega\in \Omega$.
\end{enumerate}
The following lemmas present suitable versions of Lebesgue and It\^o integrals 
with values in $L_p$. The first two of them are obvious corollaries 
of Lemmas 4.3 and 4.4 in \cite{K2010}. 

\begin{lemma}                                         \label{lemma f}
Let $f\in\bL_p(V)$ for $V=L_p(\bR^d,\bR^M)$. 
Then there exists a function $m\in \cU_p$ 
such that for each $\varphi\in C_0^\infty$ almost surely
$$
(m_t,\varphi)=\int_0^t(f_s,\varphi)\,ds
$$
holds for all $t\in[0,T]$. Furthermore, we have
$$
E\int_{\bR^d}\sup_{t\leq T}|m_t(x)|^p\,dx\leq NT^{p-1}E\int_0^T|f_s|^p_{L_p}\,ds,
$$
with a constant $N=N(p,M)$. 
\end{lemma}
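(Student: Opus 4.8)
The plan is to take for $m$ the Lebesgue primitive $m_t=\int_0^t f_s\,ds$ and to produce a single pointwise-defined representative of it that simultaneously satisfies all four defining properties of $\cU_p$. First I would fix a $\cP\otimes\cB(\bR^d)$-measurable representative $f_t(\omega,x)$ of the predictable $L_p$-valued process $f$; such a jointly measurable version exists because $L_p(\bR^d,\bR^M)$ is separable, so $f$ is an $L_p$-limit of predictable simple functions of the form $\sum_j\xi_j(\omega,t)e_j(x)$ with $\xi_j$ real predictable and $e_j$ fixed Borel representatives of $L_p$ elements, each of which is manifestly jointly measurable.

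Next I would define, for each $(\omega,x)$,
$$
m_t(\omega,x)=\int_0^t f_s(\omega,x)\,ds
$$
on the measurable set $A=\{(\omega,x):\int_0^T|f_s(\omega,x)|\,ds<\infty\}$, and set $m_t(\omega,x)=0$ off $A$. Since $E\int_0^T|f_s|_{L_p}^p\,ds<\infty$, Tonelli's theorem gives $\int_0^T|f_s(\omega,x)|^p\,ds<\infty$ for $P\otimes dx$-a.e. $(\omega,x)$, and H\"older in $s$ then places a.e. $(\omega,x)$ in $A$. On $A$ the map $t\mapsto m_t(\omega,x)$ is absolutely continuous, hence cadlag, which is property (iii); joint measurability (property (i)) follows from that of $f$ via Fubini applied to the $ds$-integral; and $\cF_t$-adaptedness for each fixed $x$ (property (ii)) holds because the predictable, hence progressively measurable, process $f(\cdot,\cdot,x)$ has $\int_0^t f_s(\cdot,x)\,ds$ an $\cF_t$-measurable functional of its restriction to $[0,t]$.

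The maximal estimate is then elementary. For $(\omega,x)\in A$ the triangle inequality for vector integrals gives $\sup_{t\leq T}|m_t(x)|\leq\int_0^T|f_s(x)|\,ds$, so by Jensen (or H\"older with exponents $p$ and $p/(p-1)$) in the $ds$-integral,
$$
\sup_{t\leq T}|m_t(x)|^p\leq T^{p-1}\int_0^T|f_s(x)|^p\,ds,
$$
and integrating in $x$ and taking expectation yields the claimed bound, indeed with $N=1$ for the Euclidean norm (an $M$-dependent constant appears only if one argues componentwise, reducing to the scalar statements in \cite{K2010}). This finiteness shows $\int_{\bR^d}\sup_{t\leq T}|m_t(\omega,x)|^p\,dx<\infty$ for a.e. $\omega$; discarding a $P$-null set, on which I reset $m\equiv0$, I may assume it for every $\omega$, so $m_t(\omega,\cdot)\in L_p$ for all $t$. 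Property (iv), cadlag-ness in $L_p$ for every $\omega$, then follows from dominated convergence, since $|m_t(x)-m_{t'}(x)|\leq G_\omega(x):=\int_0^T|f_s(\omega,x)|\,ds\in L_p$ dominates the integrand while $m_{t'}(x)\to m_t(x)$ pointwise as $t'\to t$. Finally, the identity $(m_t,\varphi)=\int_0^t(f_s,\varphi)\,ds$ for $\varphi\in C_0^\infty$ is a Fubini swap, justified because $\varphi$ has compact support and $s\mapsto|f_s|_{L_p}$ is integrable on $[0,T]$ almost surely.

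I expect the only real subtlety to be the bookkeeping: arranging that a single representative $m$ fulfils the pointwise-in-$x$ requirements (ii)--(iii) and the $L_p$-valued requirement (iv) at once, and that (iv) holds for \emph{every} $\omega$ rather than almost every $\omega$. This is handled by starting from a fixed jointly measurable representative of $f$ together with the domination $G_\omega\in L_p$; no deeper difficulty arises, which is why the lemma is flagged as an immediate corollary of the corresponding scalar statements in \cite{K2010}.
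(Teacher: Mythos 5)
The paper offers no argument of its own here: it obtains this lemma (and Lemma \ref{lemma g}) as an immediate componentwise corollary of Lemmas 4.3 and 4.4 of \cite{K2010}, applying Krylov's scalar statement to each coordinate $m^i_t=\int_0^t f^i_s\,ds$, which is where the constant $N=N(p,M)$ comes from. Your proof instead unpacks the construction directly --- a jointly measurable representative via predictable simple functions, the pointwise primitive $\int_0^t f_s(\omega,x)\,ds$, H\"older for the maximal bound, dominated convergence for continuity in $L_p$ (which is even stronger than the cadlag property required in (iv)) --- and this is, in substance, exactly the argument behind the cited lemma; as you note, working with the Euclidean norm on $\bR^M$ even gives $N=1$. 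So your route is self-contained rather than mathematically different.

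One step as written does not survive scrutiny. You set $m\equiv 0$ off $A=\{(\omega,x):\int_0^T|f_s(\omega,x)|\,ds<\infty\}$, and $A$ is defined through the \emph{whole} interval $[0,T]$, so for fixed $x$ the indicator $\mathbf{1}_A(\cdot,x)$ is $\cF_T$-measurable but in general not $\cF_t$-measurable; your justification of property (ii) (adaptedness for \emph{each} $x\in\bR^d$) quietly ignores this anticipating factor. Tonelli only yields that the section $A_x$ has full $P$-measure for Lebesgue-almost every $x$, not for every $x$, so at exceptional $x$ the truncation genuinely breaks adaptedness; and the obvious alternative, the adaptive truncation $\mathbf{1}_{\{\int_0^t|f_s(\omega,x)|\,ds<\infty\}}\int_0^tf_s(\omega,x)\,ds$, would instead endanger the everywhere-in-$(\omega,x)$ cadlag requirement (iii), since the vector integral can oscillate without a left limit at the blow-up time. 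The repair is easy and should be stated: the set $N_0=\{x:P(A_x)<1\}$ is Borel (Fubini) and Lebesgue-null; redefine $m(\cdot,\cdot,x)\equiv 0$ for $x\in N_0$, and for $x\notin N_0$ use that $\cF_0$ contains all $P$-null sets of $\cF$, so $\mathbf{1}_A(\cdot,x)$, being almost surely equal to $1$, is $\cF_0$-measurable and the product is $\cF_t$-adapted. The same completeness argument also legitimises your resetting of $m\equiv 0$ on the $P$-null $\omega$-set in the verification of (iv). With this patch your proof is correct and delivers exactly the statement of the lemma.
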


\begin{lemma}                                                                   \label{lemma g}
Let $g$ be from $\bL_p(V)$ for $V=L_p(\bR^d,\ell_2)$. 
Then there exists a function $a\in\cU_p$ 
such that for each $\varphi\in C_0^\infty$ almost surely 
$$
(a_t,\varphi)=\sum_{r=1}^\infty \int_0^t(g_s^r,\varphi)\,dw_s^r
$$
holds for all $t\in[0,T]$. Furthermore, we have
$$
E\int_{\bR^d}\sup_{t\leq T}|a_t(x)|^p\,dx
\leq NT^{(p-2)/2}E\int_0^T|g_s|^p_{L_p}\,ds
$$
with a constant  $N=N(p,M)$.
\end{lemma}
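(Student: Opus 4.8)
The plan is to follow the same route as for the drift term in Lemma \ref{lemma f}, replacing the Lebesgue integral by the It\^o integral against $w$. For $M=1$ this is exactly Lemma 4.4 of \cite{K2010}, and the $\bR^M$-valued case follows by treating each coordinate $a^i$ separately; I will nevertheless describe the direct construction that underlies both. First I would define $a$ for a dense class of \emph{elementary} integrands: those $g$ for which, for each fixed $x\in\bR^d$, the process $(g_s^r(x))_{s}$ is a simple predictable $\ell_2$-valued process, i.e.\ a finite sum of terms of the form $\mathbf{1}_{(s_j,s_{j+1}]}(s)\,\xi_j^r(x)$ with $\xi_j(x)$ bounded and $\cF_{s_j}$-measurable. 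For such $g$ the field
$$
a_t(x)=\sum_{r}\int_0^t g^r_s(x)\,dw^r_s
$$
is a finite sum of ordinary It\^o integrals, hence is defined pointwise in $x$, is jointly measurable, is $\cF_t$-adapted for each $x$, and is continuous (in particular cadlag) in $t$; this gives properties (i)--(iii) of $\cU_p$ directly, while property (iv) will follow from the estimate below.

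The key step is the maximal inequality. For fixed $(\omega,x)$ the $\bR^M$-valued process $t\mapsto a_t(x)$ is a continuous local martingale with quadratic variation $\langle a(x)\rangle_t=\int_0^t|g_s(x)|_{\ell_2}^2\,ds$, so by the Burkholder--Davis--Gundy inequality
$$
E\sup_{t\le T}|a_t(x)|^p\le N\,E\Big(\int_0^T|g_s(x)|_{\ell_2}^2\,ds\Big)^{p/2}
$$
with $N=N(p,M)$. Since $p\ge2$, H\"older's inequality in $s$ gives $\big(\int_0^T|g_s(x)|_{\ell_2}^2\,ds\big)^{p/2}\le T^{(p-2)/2}\int_0^T|g_s(x)|_{\ell_2}^p\,ds$, and integrating over $x\in\bR^d$ and applying Fubini yields
$$
E\int_{\bR^d}\sup_{t\le T}|a_t(x)|^p\,dx\le N\,T^{(p-2)/2}\,E\int_0^T|g_s|_{L_p}^p\,ds,
$$
which is precisely the asserted bound.

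With the estimate in hand I would extend $a$ to all $g\in\bL_p(V)$ by density. Choosing elementary $g^{(n)}\to g$ in $\bL_p(V)$ and passing to a fast-converging subsequence, the maximal inequality applied to $g^{(n)}-g^{(m)}$ shows that $(a^{(n)})$ is Cauchy in the complete norm $\big(E\int_{\bR^d}\sup_{t\le T}|\cdot|^p\,dx\big)^{1/p}$; one thus obtains an a.s.\ limit $a_t(x)$ that is uniform in $t$ for almost every $x$, whence the cadlag (indeed continuous) path property and the adaptedness are inherited for a.e.\ $x$, and after discarding a null set in $x$ one gets $a\in\cU_p$ satisfying the stated estimate. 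Finally, for elementary $g$ a direct interchange of the pairing $(\cdot,\varphi)$ with the finite sum of It\^o integrals gives $(a_t,\varphi)=\sum_r\int_0^t(g^r_s,\varphi)\,dw^r_s$; letting $n\to\infty$, the left side converges because $a^{(n)}_t\to a_t$ in $L_p$ and $\varphi\in C_0^\infty$ lies in the conjugate space, while the right side converges by the It\^o isometry, so the identity persists in the limit and holds for all $t\in[0,T]$ almost surely.

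The main obstacle I expect is producing a single version of the integral that is simultaneously good pointwise in $x$ (cadlag and adapted for each fixed $x$) and good as an $L_p(\bR^d,\bR^M)$-valued path, and reconciling these two viewpoints; the maximal inequality is exactly the device that links the pointwise-in-$x$ suprema to the $L_p$ norm and makes the passage to the limit coherent. A secondary technical point is the interchange of the spatial pairing with the stochastic integral, a stochastic Fubini argument, which is immediate for elementary integrands and is then transferred to general $g$ by the same limiting procedure.
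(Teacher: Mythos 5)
Your proposal is correct and matches the paper's approach: the paper obtains this lemma directly as a corollary of Lemma 4.4 in \cite{K2010} (extending to the $\bR^M$-valued case coordinatewise), and your reconstruction---simple predictable integrands, the Burkholder--Davis--Gundy inequality plus H\"older in time to get the $T^{(p-2)/2}$ maximal estimate, then extension by density with a fast-converging subsequence---is precisely the argument underlying Krylov's lemma. The measurability/adaptedness technicalities you flag (choosing a single version good both pointwise in $x$ and as an $L_p$-valued path) are handled as you indicate, using the completeness of the filtration and discarding a Lebesgue-null set of $x$.
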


The proof of the following lemma can be found in \cite{GW Ito}.
\begin{lemma}                                                                       \label{lemma h}
Let $h\in\mathbb{L}_{p,2}$. 
Then there exists a function $b\in\mathcal{U}_p$ 
such that for each real-valued $\varphi\in L_q(\bR^d)$ 
with $q=p/(p-1)$, almost surely 
\begin{equation}                                                                   \label{weak3}
(b_t,\varphi)=\int_0^t\int_Z(h_s,\varphi)\,\tilde{\pi}(dz,ds)
\end{equation}
for all $t\in[0,T]$,   and 
\begin{equation}                                                                   \label{weaksup}
E\sup_{t\leq T}|(b_t,\varphi)|
\leq 3T^{(p-2)/(2p)}|\varphi|_{L_q}\left(E\int_0^T|h_t|_{L_p(\cL_2)}^p\,dt\right)^{1/p}. 
\end{equation}
Furthermore
\begin{equation}                                                                       \label{Lp}
E\int_{\bR^d}\sup_{t\leq T}|b_t(x)|^p\,dx
\leq NE\int_0^T|h_t|^p_{L_p(\cL_p)}\,dt+NT^{(p-2)/2}E\int_0^T|h_t|^p_{L_p(\cL_2)}\,dt
\leq N'|h|^p_{\bL_{p,2}}
\end{equation}
with constants $N=N(p,M)$ and $N'=N'(p,M,T)$.
\end{lemma}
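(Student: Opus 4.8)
The plan is to follow the scheme of Lemmas 4.3 and 4.4 in \cite{K2010}, replacing the Wiener stochastic integral by the integral against the compensated Poisson measure $\tilde\pi$. First I would establish the assertion for simple integrands, then prove the two a priori estimates \eqref{weaksup} and \eqref{Lp} for such integrands, and finally pass to the limit by density of simple integrands in $\bL_{p,2}$. Since the $\sigma$-algebra $\cZ$ is countably generated and $\mu$ is $\sigma$-finite, the space $\cL_{p,2}=\cL_p\cap\cL_2$ is separable, hence so is $V=L_p(\bR^d,\cL_{p,2})$; consequently every $h\in\bL_{p,2}=\bL_p(V)$ is the $\bL_{p,2}$-limit of a sequence $h^{(n)}$ of simple predictable $V$-valued processes. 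For a simple $h^{(n)}=\sum_j \mathbf{1}_{(t_j,t_{j+1}]}\eta_j$ with $\cF_{t_j}$-measurable $\eta_j$, the integral against $\tilde\pi$ is written down explicitly, and the resulting field $b^{(n)}_t(x)$ is manifestly jointly measurable, adapted, cadlag in $t$ for every $(\omega,x)$ and cadlag as an $L_p$-valued process, so $b^{(n)}\in\cU_p$, and \eqref{weak3} holds for it by linearity of the elementary integral.

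The engine of the proof is the Kunita (Burkholder--Davis--Gundy type) inequality for jump integrals: for $p\ge2$ and a predictable $\bR^M$-valued integrand $H$,
\begin{equation*}
E\sup_{t\le T}\Big|\int_0^t\int_Z H_s(z)\,\tilde\pi(dz,ds)\Big|^p
\le N\,E\Big(\int_0^T\!\!\int_Z|H_s(z)|^2\mu(dz)\,ds\Big)^{p/2}
+N\,E\int_0^T\!\!\int_Z|H_s(z)|^p\mu(dz)\,ds
\end{equation*}
with $N=N(p)$. Applying this with $H_s(z)=h^{(n)}_s(x,z)$ for each fixed $x$, integrating over $\bR^d$ and using Fubini gives, after bounding $(\int_0^T|h_s(x,\cdot)|_{\cL_2}^2\,ds)^{p/2}\le T^{(p-2)/2}\int_0^T|h_s(x,\cdot)|_{\cL_2}^p\,ds$ by H\"older in $s$, exactly \eqref{Lp}; the final bound by $N'|h|^p_{\bL_{p,2}}$ follows from $\max(|h_s|_{L_p(\cL_p)},|h_s|_{L_p(\cL_2)})\le|h_s|_{L_p(\cL_{p,2})}$. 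For \eqref{weaksup} I would instead note that $t\mapsto(b^{(n)}_t,\varphi)$ is a square-integrable $\bR^M$-valued martingale, so by Doob's $L_2$-inequality and the It\^o isometry
\begin{equation*}
E\sup_{t\le T}|(b^{(n)}_t,\varphi)|
\le 2\Big(E\int_0^T|(h^{(n)}_s,\varphi)|_{\cL_2}^2\,ds\Big)^{1/2};
\end{equation*}
since $|(h_s,\varphi)|_{\cL_2}\le\int_{\bR^d}|h_s(x,\cdot)|_{\cL_2}|\varphi(x)|\,dx\le|h_s|_{L_p(\cL_2)}|\varphi|_{L_q}$ by Minkowski's integral inequality and H\"older in $x$, another application of H\"older in $(\omega,s)$ with exponent $p/2$ produces the factor $T^{(p-2)/(2p)}$ and yields \eqref{weaksup} (with constant $2\le3$).

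With the estimates in hand, the limiting argument is routine. Applying \eqref{Lp} to $h^{(n)}-h^{(m)}$ shows that $(b^{(n)})$ is Cauchy for the norm $\big(E\int_{\bR^d}\sup_{t\le T}|\,\cdot_t(x)|^p\,dx\big)^{1/p}$; let $b$ be its limit. A single estimate controls both descriptions of $b$: passing to an a.e.\ convergent (in $x$, uniformly in $t$, a.s.) subsequence preserves adaptedness and the cadlag property in $t$ for a.e.\ $x$, while $\sup_{t\le T}|b^{(n)}_t-b_t|_{L_p}^p\le\int_{\bR^d}\sup_{t\le T}|b^{(n)}_t(x)-b_t(x)|^p\,dx$ gives convergence, uniform in $t$, in the $L_p$-valued sense, whence $b$ is $L_p$-valued cadlag; thus $b\in\cU_p$. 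Identity \eqref{weak3} for $b$ follows by letting $n\to\infty$, using \eqref{weaksup} applied to $h^{(n)}-h$ on the left and the It\^o isometry on the right, first for $\varphi$ in a countable dense subset of $L_q$ and then for all $\varphi$ by continuity in $\varphi$. Finally \eqref{weaksup} and \eqref{Lp} for $b$ are inherited from $b^{(n)}$ by Fatou's lemma.

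The step I expect to be the main obstacle is the reconciliation of the two facets of membership in $\cU_p$: producing a single jointly measurable version $b_t(x)$ that is simultaneously cadlag in $t$ for \emph{every} $(\omega,x)$ and cadlag as an $L_p$-valued process. The a priori estimate \eqref{Lp} is precisely what makes this possible, so the delicate points are the correct form of the Kunita inequality (splitting into an $\cL_2$ and an $\cL_p$ term, which is exactly what forces the space $\bL_{p,2}$) together with the careful bookkeeping with Fubini, Minkowski's integral inequality and H\"older needed to land on the precise norms and constants stated.
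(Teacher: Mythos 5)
Your proposal is correct and follows essentially the route of the paper: the paper itself does not spell out a proof but defers to \cite{GW Ito}, where the argument is exactly as you describe --- approximation by simple predictable integrands, the Kunita--BDG inequality with the $\cL_2$ and $\cL_p$ terms applied pointwise in $x$ plus Fubini and H\"older in $s$ for \eqref{Lp}, Doob's $L_2$ inequality with the isometry, Minkowski's integral inequality and H\"older for \eqref{weaksup}, and passage to the limit in the norm $\big(E\int_{\bR^d}\sup_{t\leq T}|\cdot|^p\,dx\big)^{1/p}$ to secure membership in $\cU_p$. The only point to tighten is that your ``explicit'' elementary integrals require the $\eta_j$ to be simple in $z$ (over sets of finite $\mu$-measure) as well as in $t$, so that $b^{(n)}_t(x)$ is a finite sum of terms $\xi(x)\tilde\pi(\Gamma\times(t_j\wedge t,t_{j+1}\wedge t])$ and the joint measurability and cadlag-in-$t$-for-every-$(\omega,x)$ claims are indeed manifest; such doubly simple functions are dense in $\bL_{p,2}$ since $\cZ$ is countably generated and $\mu$ is $\sigma$-finite, so this is a refinement, not a gap.
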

We are now in the position to sketch the proof of Theorem \ref{theorem1}. 
Technical details can be found in \cite{GW Ito}.

\begin{proof}[Proof of Theorem \ref{theorem1}(Sketch)] 
By using standard stopping time arguments we may assume 
$E|\psi^i|^p_{L_p}<\infty$ and that 
$$
E\int_0^T|u_t^i|^p_{W^1_p}\,dt<\infty, \quad E\cK_p^p(T)<\infty
\quad\text{and\quad $E\int_0^T|u_t^i|^p_{L_p}\,dt<\infty$} 
$$
hold in place of \eqref{1}, \eqref{2} and \eqref{L}, respectively for every 
$i=1,2,...,M$. 
We prove first the last sentence of the theorem. We have 
$f^{ik}=0$ for $i=1,2,...,M$, $k=1,2,...,d$ and use the notation 
$f^i:=f^{i0}$.  
By Lemmas \ref{lemma f}, \ref{lemma g} and \ref{lemma h} there exist $a=(a^i)$ 
and $b=(b^i)$ and $m=(m^i)$ in $\mathcal{U}_p$ 
such that for each $\varphi\in C_0^\infty$ almost surely
$$
(a_t^i,\varphi)=\int_0^t(f^i_s,\varphi)ds,
\quad 
(b_t^i,\varphi)=\int_0^t(g^{ir}_s,\varphi)dw_s^r
$$
and
$$
(m^i_t,\varphi)=\int_0^t\int_Z(h^i_s,\varphi)\,\tilde{\pi}(dz,ds)
$$
for all $t\in [0,T]$ and $i=1,...,M$. Thus $a+b+m$ is an 
$L_p$-valued adapted cadlag process such that 
for $\bar u_t:=\psi+a_t+b_t+m_t$ we have 
$(\bar u_t,\varphi)=
(u_t,\varphi)$ for each $\varphi\in C_0^{\infty}$ 
for $P\otimes dt$ almost every 
$(\omega,t)\in\Omega\times[0,T]$. 
Hence, by taking a countable set 
$\Phi\subset C_0^{\infty}$ such that $\Phi$ is dense in $L_q$, 
we get that  $\bar u=u$ 
for $P\otimes dt$ almost everywhere 
as $L_p$-valued functions. Moreover, for each 
$\varphi\in C_0^{\infty}$
\begin{equation}                                                          \label{equ1}
(\bar u^i_t,\varphi)
=(\psi,\varphi)
+\int_0^t(f^i_s,\varphi)\,ds
+\int_0^t(g_s^{ir},\varphi)\,dw_s^r
+\int_0^t\int_Z(h^i_s(z),\varphi)\,\tilde{\pi}(dz,ds)
\end{equation}
almost surely for all $t\in[0,T]$, $i=1,2,...,M$, 
since on both sides we have cadlag 
processes.  
By the estimates of Lemmas 
\ref{lemma f}, \ref{lemma g} and \ref{lemma h},
$$
E\int_{\mathbb{R}^d}\sup_{t\leq T}|u_t(x)|^p\,dx
$$
\begin{equation}                                            \label{u sup estimate}
\leq N\left( E|\psi|^p_{L_p}+ |f|^p_{\bL_p}
+|g_s|^p_{\bL_p}
+|h|^p_{\bL_{p,2}} \right)<\infty, 
\end{equation}
where $N=N(p,M, T)$ is a constant.  
Substituting $k_{\varepsilon}(x-\cdot)$ 
in place of $\varphi$  in equation \eqref{equ1}, 
for $\varepsilon>0$ and $x\in\bR^d$ 
we have \eqref{eqex} 
almost surely for all $t\in[0,T]$ for $i=1,2,...,M$. Hence 
by Corollary \ref{corollary Ito}
for each $x\in \bR^d$ we have almost surely 
$$
|\bar u^{(\varepsilon)}_t(x)|^p
=  |\psi^{(\varepsilon)}(x)|^p
+\int_0^t
p|\bar u^{(\varepsilon)}_{s-}(x)|^{p-2}
\bar u^{i(\varepsilon)}_{s-}(x)g^{ir(\varepsilon)}_s(x)\,dw^r_s 
$$
$$
+\int_0^t
p|\bar u_{s-}^{(\varepsilon)}(x)|^{p-2}
\bar u_{s-}^{(\varepsilon)i}f_s^{i(\varepsilon)}(x) \,ds 
$$
$$             
+\tfrac{p}{2}\int_0^t\big((p-2)
|\bar u_{s-}^{(\varepsilon)}(x)|^{p-4}
|\bar u_{s-}^{i(\varepsilon)}(x)g^{i\cdot(\varepsilon)}_s(x)|^2_{l_2}
+
|\bar u_{s-}^{(\varepsilon)}(x)|^{p-2}
|g_s^{(\varepsilon)}(x)|_{\ell_2}^2  \big)\,ds      
$$
\begin{equation}                                                                            \label{Itox}
+\int_0^t
\int_Z p|\bar u_{s-}^{(\varepsilon)}(x)|^{p-2}
\bar u_{s-}^{(\varepsilon)i}(x)h_s^{(\varepsilon)i}(x)
\,\tilde{\pi}(dz,ds)
+\int_0^t\int_Z 
J^{h_s^{(\varepsilon)}(x,z)}|\bar u_{s-}^{(\varepsilon)}(x)|^p \,\pi(dz,ds),        
\end{equation}
for all $t\in[0,T]$, where the notation 
$$
J^a|v|^p=|v+a|^p-|v|^p-a^iD_{i}|v|^p=|v+a|^p-|v|^p-pa^i|v|^{p-2}v^i
$$
is used for vectors $a=(a^1,...,a^M):=\bar u_{s-}^{(\varepsilon)}(x)$ and 
$(v^1,...,v^M):=h_s^{(\varepsilon)}(x,z)\in\bR^M$. 
Furthermore, integrating \eqref{Itox} over $\bR^d$ and using deterministic and stochastic 
Fubini theorems, see in \cite{GW Ito}, we get  
$$
|\bar u^{(\varepsilon)}_{t}|_{L_p}^p= 
 |\psi^{(\varepsilon)}|_{L_p}^p
+\int_0^t\int_{\bR^d}
p|u^{(\varepsilon)}_s|^{p-2}u^{i(\varepsilon)}_sg^{ir(\varepsilon)}_s\,dx\,dw^r_s
$$
$$
+\tfrac{p}{2}\int_0^t\int_{\bR^d}
2|u_s^{(\varepsilon)}|^{p-2}u_s^{i(\varepsilon)}f_s^{i(\varepsilon)}               
+
(p-2)
|u_s^{(\varepsilon)}|^{p-4}|u_s^{i(\varepsilon)}g^{i\cdot(\varepsilon)}_s|_{l_2}^2
+|u_s^{(\varepsilon)}|^{p-2}|g_s^{(\varepsilon)}|_{l_2}^2
\,dx\,ds                                                                                                                    
$$
\begin{equation}                                                                    \label{Itoe}
+\int_0^t\int_Z\int_{\bR^d} 
p|u_{s-}^{(\varepsilon)}|^{p-2}
u_{s-}^{i(\varepsilon)}h_s^{i(\varepsilon)}\,dx\,\tilde{\pi}(dz,ds)
+\int_0^t\int_Z\int_{\bR^d} 
J^{h^{(\varepsilon)}_s}|u_{s-}^{(\varepsilon)}|^p\,dx \,\pi(dz,ds)                                      
\end{equation}
almost surely for all $t\in[0,T]$. 
Finally, by taking $\varepsilon\to0$ in \eqref{Itoe}, we obtain \eqref{Ito1} with 
$f^{ik}=0$ for $i=1,2,...,M$ and $k=1,2,...,d$. 

Let us prove now the other statements of the theorem.  By 
taking $\varphi^{(\varepsilon)}$ in place of $\varphi$ in equation \eqref{eq1}  
we get 
\begin{equation}                                                                    \label{differential}
(u_t^{i(\varepsilon)},\varphi)=(\psi^{i(\varepsilon)},\varphi)
+\int_0^t(f_s^{(i\varepsilon)},\varphi)\,ds+\int_0^t(g_s^{ir(\varepsilon)},\varphi)\,dw_s^r
+\int_0^t\int_Z(h_s^{i(\varepsilon)},\varphi)\,\tilde{\pi}(dz,ds)
\end{equation}
for $P\otimes dt$ almost every $(\omega,t)\in\Omega\times[0,T]$ for each $\varphi\in C_0^\infty$, 
$i=1,2,...,m$, where
$$
f_s^{i(\varepsilon)}:=\sum_{k=1}^dD_kf_s^{ik(\varepsilon)}+f_s^{i0(\varepsilon)}, 
\quad i=1,2,...,M,\quad k=1,2,....,d.
$$ 
Hence by virtue of what we have proved above we have an $L_p$-valued adapted cadlag 
process $\bar u^{\varepsilon}=(\bar u^{i\varepsilon})$ such that  
for each $\varphi\in C_0^{\infty}$ almost surely 
\eqref{differential} holds with $\bar u^{i\varepsilon}$ in place of $u^{i(\varepsilon)}$ for all 
$t\in[0,T]$. In particular, for each $\varphi\in C_0^{\infty}$ we have 
$(u^{(\varepsilon)},\varphi)=(\bar u^{\varepsilon},\varphi)$ for $P\otimes dt$-almost 
every $(\omega,t)\in\Omega\times[0,T]$. 
Thus $u^{(\varepsilon)}=\bar u^{\varepsilon}$, as $L_p$-valued functions,  
for $P\otimes dt$-almost every $(\omega,t)\in\Omega\times[0,T]$,  
and almost surely \eqref{Itoe} holds for all $t\in[0,T]$. 
Moreover, using that by integration by parts  
$$
\int_{\mathbb{R}^d}
|u_{s}^{(\varepsilon)}|^{p-2}u_{s}^{i(\varepsilon)}
D_kf_s^{ik(\varepsilon)}\,dx
=-\int_{\mathbb{R}^d}
|u_s^{(\varepsilon)}|^{p-2}f^{ik(\varepsilon)}_sD_ku_s^{i(\varepsilon)}\,dx 
$$
$$
-\tfrac{p-2}{2}\int_{\mathbb{R}^d}
|u_s^{(\varepsilon)}|^{p-4}D_k|u_s^{(\varepsilon)}|^2f^{ik(\varepsilon)}_su_s^{i(\varepsilon)}\,dx    
$$
for $P\otimes dt$-almost every $(\omega,t)\in\Omega\times[0,T]$, we get 
$$
|\bar u_t^{\varepsilon}|^p_{L_p}= 
|\psi^{(\varepsilon)}|_{L_p}^p
+p\int_0^t\int_{\mathbb{R}^d}|\bar u_s^{\varepsilon}|^{p-2}
\bar u_s^{\varepsilon}g_s^{ir(\varepsilon)}\,dx\,dw^r_s                           
$$
$$
+p\int_0^t\int_{\mathbb{R}^d}
|\bar u_s^{\varepsilon}|^{p-2}\bar u_s^{\varepsilon}
f^{0(\varepsilon)}_s-
|\bar u_s^{\varepsilon}|^{p-2}
f_s^{ik(\varepsilon)}D_k u_s^{i(\varepsilon)}                  
\,dx\,ds 
$$
$$
-\tfrac{p}{2}\int_0^t\int_{\mathbb{R}^d}
(p-2)|u_s^{(\varepsilon)}|^{p-4}D_k|u_s^{(\varepsilon)}|^2f^{ik(\varepsilon)}_su_s^{i(\varepsilon)}
\,dx\,ds   
$$
$$
+\int_0^t\int_{\mathbb{R}^d}(p-2)
|\bar u_{s-}^{(\varepsilon)}|^{p-4}
|\bar u_{s-}^{i(\varepsilon)}g^{i\cdot(\varepsilon)}_s|^2_{l_2}
+
|\bar u_{s-}^{(\varepsilon)}|^{p-2}
|g_s^{(\varepsilon)}|_{\ell_2}^2\,dx\,ds
$$
\begin{equation}                                                        \label{Lp Ito formula smooth}
+p\int_0^t\int_Z\int_{\mathbb{R}^d}
|\bar u_{s-}^{\varepsilon}|^{p-2}\bar u_{s-}^{\varepsilon}
h_s^{(\varepsilon)}\,dx\,\tilde{\pi}(dz,ds)                                           
+\int_0^t\int_Z\int_{\mathbb{R}^d}
J^{h^{(\varepsilon)}}|\bar u_{s-}^{\varepsilon}|^p\,dx\,\pi(dz,ds)       
\end{equation}
almost surely for all $t\in[0,T]$. Hence by Davis', Minkowski 
and H\"older inequalities, using standard estimates we obtain  
$$        
E\sup_{t\leq T}|\bar u_t^{\varepsilon}|^p_{L_p} 
\leq 2\,E|\psi^{(\varepsilon)}|_{L_p}^p
+NE\int_0^T|h_t^{(\varepsilon)}|^p_{L_p(\cL_p)}\,dt
+NT^{p-1}
|f^{0(\varepsilon)}|^p_{\bL_p}                                                              \nonumber\\
$$
\begin{equation}                                                               \label{Ito Lp estimate smooth}
+ NT^{(p-2)/2}
\left(|g^{(\varepsilon)}|^p_{\bL_p}
+E\int_0^T|h_t^{(\varepsilon)}|^p_{L_p(\cL_2)}\,dt
+\sum_{\alpha=1}^d|f^{\alpha(\varepsilon)}|_{\bL_p}^p
+\sum_{\alpha=1}^d|D_{\alpha}u^{(\varepsilon)}|^p_{\bL_p}
\right)                                                  
\end{equation}
with a constant $N=N(p,d)$, where 
$f^{\alpha(\varepsilon)}:=(f^{1\alpha(\varepsilon)},..., f^{M\alpha(\varepsilon)})$, 
and recall that $|v|_{L_p}$ means the $L_p$-norm of 
$|(\sum_{i=1}^M|v^i|^2)^{1/2}|$ 
for $\bR^M$-valued functions 
$v=(v^1,...,v^M)$ on $\bR^d$. Hence  
$$
E\sup_{t\leq T}|\bar u_t^{\varepsilon}-\bar u_t^{\varepsilon'}|^p_{L_p}
\rightarrow 0
\quad
\text{as $\varepsilon,\,\varepsilon'\to0$}.  
$$ 
Consequently, there is an $L_p$-valued adapted cadlag process 
$\bar u=(\bar u_t)_{t\in[0,T]}$ 
such that 
$$
\lim_{\varepsilon\to0}E\sup_{t\leq T}|\bar u_t^{\varepsilon}-\bar u|^p_{L_p}=0. 
$$
Thus for each $\varphi\in C_0^{\infty}(\bR^d)$ we can take $\varepsilon\to0$ in 
\begin{align*}
(\bar u_t^{i\varepsilon},\varphi)
& =(\psi^{i(\varepsilon)},\varphi)+\int_0^t(f_s^{i(\varepsilon)},\varphi)\,ds
+\int_0^t(g_s^{ir(\varepsilon)},\varphi)\,dw_s^r
+\int_0^t\int_Z(h_s^{i(\varepsilon)},\varphi)\,\tilde{\pi}(dz,ds)\\
&
= (\psi^{i(\varepsilon)},\varphi)+\int_0^t(f_s^{i0(\varepsilon)},\varphi)\,ds
-\int_0^t(f_s^{ik(\varepsilon)},D_k\varphi)\,ds
+\int_0^t(g_s^{ir(\varepsilon)},\varphi)\,dw_s^r\\
&
\quad +\int_0^t\int_Z(h_s^{i(\varepsilon)},\varphi)\,\tilde{\pi}(dz,ds)
\end{align*}
and it is easy to see that we get 
$$
(\bar u^i_t,\varphi)=(\psi^i,\varphi)+\int_0^t(f_s^{i\alpha},D^{\ast}_{\alpha}\varphi)\,ds
+\int_0^t(g_s^{ir},\varphi)\,dw_s^r+\int_0^t\int_Z(h^i_s,\varphi)\,\tilde{\pi}(dz,ds) 
$$
almost surely for all $t\in[0,T]$. Hence $\bar u= u$ for $P\otimes dt$-almost every 
$(\omega,t)\in\Omega\times[0,T]$.  
Finally letting $\varepsilon\to0$ in \eqref{Lp Ito formula smooth} 
we obtain \eqref{Ito2}. 
\end{proof}

\smallskip
\noindent

\end{document}